\documentclass[11pt,twoside]{amsart}
\usepackage{amssymb}
\usepackage{graphicx,color}

\setlength{\textheight}{8.5truein}
\setlength{\textwidth}{6.5truein}
\setlength{\evensidemargin}{0truein}
\setlength{\oddsidemargin}{0truein}
\setlength{\topmargin}{0truein}

\newtheorem{thm}{Theorem}[section]
\newtheorem{prop}[thm]{Proposition}

\newtheorem{defn}[thm]{Definition}

\newtheorem{ex}[thm]{Example}
\newtheorem{rem}[thm]{Remark}

\newcommand{\skipit}[1]{{}}
\newcommand{\prfend}{\hbox to7pt{\hfil}
\par\vskip-\baselineskip\hbox to\hsize
{\hfil\vbox {\hrule width6pt height6pt}}\vskip\baselineskip}

\newcommand{\ZZ}{\mathbb{Z}}
\newcommand{\RR}{\mathbb{R}}

\newcommand {\PP}{\mathbb{P}}

\DeclareMathOperator{\Image}{Im}

\DeclareMathOperator{\Proj}{Proj}

\newcommand{\A}{\tilde{A}}

\newcommand{\myarrow}[2]{\hbox to #1pt{\hfil$\to$\hfil}{\hskip-#1pt{\raise
10pt\hbox to#1pt{\hfil$\scriptscriptstyle #2$\hfil}}}}

\begin{document}

\title{On the classification of  Togliatti systems}

\author[Rosa M. Mir\'o-Roig]{Rosa M. Mir\'o-Roig}
\address{Department de matem\`{a}tiques i Inform\`{a}tica, Universitat de Barcelona, Gran Via de les Corts Catalanes 585, 08007 Barcelona,
Spain}
\email{miro@ub.edu}

\author[Mart\'{\i} Salat]{Mart\'{\i} Salat}
\address{Department de matem\`{a}tiques  i Inform\`{a}tica, Universitat de Barcelona, Gran Via de les Corts Catalanes 585, 08007 Barcelona,
Spain}
\email{msalatmo7@alumnes.ub.edu}

\begin{abstract} In \cite{MeMR}, Mezzetti and Mir\'{o}-Roig proved that the minimal number of generators $\mu (I)$ of  a minimal (smooth) monomial Togliatti system $I\subset k[x_{0},\dotsc,x_{n}]$ satisfies 
$2n+1\le \mu(I)\le \binom{n+d-1}{n-1}$ and they classify all smooth minimal monomial Togliatti systems $I\subset k[x_{0},\dotsc,x_{n}]$ with $2n+1\le \mu(I)\le 2n+2$.

In this paper, we address the first open case. We classify all smooth monomial Togliatti systems $I\subset k[x_{0},\dotsc,x_{n}]$ of forms of degree $d\ge 4$ with $\mu(I)=2n+3$ and $n\ge 2$ and all 
monomial Togliatti systems $I\subset k[x_0,x_1,x_2]$ of forms of degree $d\ge 6$ with $\mu(I)=7$.
\end{abstract}

\thanks{Acknowledgments:   The first  author was partially   supported
by  MTM2013-45075-P.
\\ {\it Key words and phrases.} Osculating space, weak Lefschetz
property, Laplace equations, monomial ideals,
toric varieties.
\\ {\it 2010 Mathematic Subject Classification.} 13E10,  14M25, 14N05,
14N15, 53A20.}

\maketitle

\tableofcontents

\markboth{R. M. Mir\'o-Roig, M. Salat}{On the classification of  Togliatti systems}

\today

\large

\section{Introduction}

The study and classification of smooth projective varieties satisfying at least one Laplace equation is a long standing problem in algebraic geometry. In \cite{MMO}, shedding new light on this subject, 
it was related to another long standing problem in commutative algebra: the study and classification of homogeneous artinian ideals failing the Weak Lefschetz Property (WLP). We contribute to these two 
problems resolving the first question that was left open  in \cite{MeMR}.

To be more precise.
Let $k$ be an algebraically closed field of characteristic $0$, $R=k[x_{0},\dotsc,x_{n}]$ and  $I=(F_{1},\dotsc, F_{s})\subset R$  a homogeneous artinian ideal generated by forms of the same degree $d$. 
Set $A=R/I$. We say that $A$ fails the WLP from degree $d-1$ to degree $d$ if the homomorphism $\times\ell:[A]_{d-1}\rightarrow[A]_{d}$ induced by a general linear form $\ell$ has not maximal
rank. As shown in \cite{MMO}, if $s\le \binom{n+d-1}{d}$ then this assertion is equivalent to saying that the projection $X_{n,d}^{I}$ of the $d$th Veronese variety $V(n,d)\subset\PP^{n}$ from $\langle F_{1},\dotsc, F_{s}\rangle$ 
satisfies at least one Laplace equation of order $d-1$. We call {\em Togliatti systems} the ideals satisfying these two equivalent statements (see Definition \ref{togliattisystem}). The name is in honour of E. Togliatti who gave a
complete classification of rational surfaces parameterized by cubics and satisfying at least one Laplace equation of order $2$ (see for instance \cite{T1},\cite{T2}).
Narrowing the field of study we deal only with monomial ideals $I$, so $X_{n,d}^{I}$ turns out to be a toric variety. In this sense, one can apply pure combinatoric tools due to Perkinson in \cite{P} to see whether $I$ is
a minimal monomial (smooth) Togliatti system (see Definition \ref{togliattisystem} and Propositions \ref{Hypersurface_Laplace} and \ref{smoothness}).
In \cite{MMO}, using these tools, Mezzetti, Mir\'o-Roig and Ottaviani classified all  smooth minimal monomial Togliatti systems of cubics in four variables and conjectured a further classification for $n\geq 3$.
By means of graph theory, this conjecture was proved by Mir\'o-Roig and Micha{\l}ek in \cite{MRM} where a classification of smooth minimal Togliatti systems $I\subset k[x_{0},\dotsc,x_{n}]$ of quadrics and cubics is achieved.
When $d\geq 4$ the picture becomes much more involved and a complete classification seems out of reach for now. Therefore, in \cite{MeMR} it was introduced another strategy: First to establish lower and upper bounds, depending on 
$n$ and $d\ge 2$, for the minimal number of generators of a monomial Togliatti system and then to study the monomial Togliatti systems with fixed number of generators.

In fact, in \cite{MeMR}, Mezzetti and Mir\'o-Roig bounded the number of generators of monomial Togliatti systems and classified all minimal monomial (smooth) Togliatti systems reaching the lower bound or close to reach it; namely 
those generated by $2n+1$ and $2n+2$ forms of degree $d\geq 4$. In this paper, we use again combinatoric tools and we  classify the first open case, i.e.,  all minimal monomial Togliatti systems generated by $2n+3$
forms of degree $d\geq 4$ and $n\geq 2$.

\vskip 2mm
Next we outline the structure of this note.  In
Section~\ref{defs and  prelim results} we  fix the notation
and we collect the basic results needed in the sequel. Then, in
Section~\ref{minimalnumbergenerators} we expose the main results of this note. Firstly, we give a complete classification of all minimal monomial Togliatti systems generated by $7$ forms of degree $d\geq 10$ in three variables
(see Theorem \ref{mainthm1}).
In order to achieve this classification we have had to consider all possible configurations of these $7$ monomials regarded in the integer standard simplex $d\Delta_{2}\subset\ZZ^{3}$ and then apply combinatorial criteria to each configuration.
Separating the problem in two basic cases which we have also had to separate into a few more subcases has helped so as to reduce the number of configurations to study. Once seen this classification we compute all minimal monomial Togliatti systems
generated by $7$ forms of degree $6\leq d\leq 9$ getting a complete scene of what occurs in three variables. From this result we can look apart all minimal monomial smooth Togliatti systems in three variables
generated by $7$ forms of degree $d\geq 6$ and close the open question we were dealing with.

\vskip 2mm \noindent {\bf Acknowledgement.} The first author of this paper warmly thanks Emilia Mezzetti for interesting conversations and many ideas developed in this paper.

\section{Preliminaries} \label{defs and prelim results}

  We fix $k$  an algebraically closed field of characteristic zero, $R=k[x_{0},\dotsc,x_{n}]$ and $\PP^n=\Proj(k[x_{0},\dotsc,x_{n}])$.  Given a homogeneous artinian ideal 
  $I\subset k[x_{0},\dotsc,x_{n}]$, we  denote by $I^{-1}$ the ideal generated by the Macaulay inverse system of $I$ (see \cite[\S 3]{MMO} for details). 
  In this section we fix the notations and the main results that we use throughout this paper.
 In particular, we quickly
recall  the relationship between   the
existence of homogeneous  artinian ideals $I\subset
k[x_{0},\dotsc,x_{n}]$ failing the weak Lefschetz property; and  the
existence of (smooth) projective varieties $X\subset \PP^N$ satisfying
at least one Laplace equation of order $s\ge 2$.
For more details,  see \cite{MMO} and \cite{MRM}.

\begin{defn}\label{def of wlp}\rm Let $I\subset R$ be a
homogeneous artinian ideal. We  say that   $R/I$
 has the {\em weak Lefschetz property} (WLP, for short)
if there is a linear form $L \in (R/I)_1$ such that, for all
integers $j$, the multiplication map
\[
\times L: (R/I)_{j} \to (R/I)_{j+1}
\]
has maximal rank, i.e.\ it is injective or surjective.
We  often abuse notation and say that the ideal $I$ has the
WLP.    If for the general form $L \in (R/I)_1$
and for an integer number $j$
the map $\times L$ has not maximal rank we  say that
the ideal $I$ fails the WLP in degree $j$.
\end{defn}

  Though many algebras
 are expected to have the
WLP, establishing this property is often rather difficult. For
example, it was shown by R. Stanley \cite{st} and J. Watanabe
\cite{w} that a monomial artinian complete intersection ideal
$I\subset R$ has the WLP. By semicontinuity, it follows that
a {\em general}   artinian complete intersection ideal $I\subset
R$ has the WLP but it is open whether {\em every} artinian complete
intersection of height $\ge 4$
over a field of characteristic zero has the WLP. It is worthwhile to
point out that
the WLP of an artinian ideal $I$ strongly depends on the characteristic
of the ground field $k$ and,
 in positive characteristic, there are examples of artinian complete
intersection ideals $I\subset k[x_0,x_1,x_2]$ failing the WLP (see,
e.g. \cite[Remark 7.10]{MMN2}).

In \cite{MMO}, Mezzetti, Mir\'{o}-Roig, and Ottaviani showed  that  the
failure of the
WLP can be used to construct  (smooth) varieties satisfying at least
one   Laplace equation of order $s\ge 2$ (see also \cite{MRM}, \cite{MeMR} and \cite{MeMR2}).
 We have:

\begin{thm}\label{teathm} Let $I\subset R$ be an artinian
ideal
generated
by $r$ forms $F_1,...,F_{r}$ of degree $d$ with
$r\le \binom{n+d-1}{n-1}$. The following conditions are equivalent:
\begin{itemize}
\item[(1)] the ideal $I$ fails the WLP in degree $d-1$;
\item[(2)] the  homogeneous forms $F_1,...,F_{r}$ become
$k$-linearly dependent on a general hyperplane $H$ of $\PP^n$;
\item[(3)] the closure
$X:=\overline{\Image (
\varphi _{(I^{-1})_d})}\subset \PP^{\binom{n+d}{d}-r-1}$ of the image of
the rational map
$$\varphi _{(I^{-1})_d}:\PP^n \dashrightarrow \PP^{\binom{n+d}{d}-r-1}$$  associated to
  $(I^{-1})_d$  satisfies at least one Laplace equation of order
$d-1$.
\end{itemize}
\end{thm}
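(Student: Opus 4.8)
The plan is to establish the chain of equivalences $(1)\Leftrightarrow(2)\Leftrightarrow(3)$ by working with the Macaulay inverse system and apotome-style duality. First I would set up the apolarity pairing between $[R]_d$ and its dual; the key observation is that for a general linear form $L=\sum a_ix_i$, the multiplication map $\times L\colon [A]_{d-1}\to[A]_d$ is, up to the usual identifications, dual to a restriction map on the inverse system. Concretely, $[A]_{d-1}=[R]_{d-1}/[I]_{d-1}$ and $[A]_d=[R]_d/[I]_d$, but since $[I]_{d-1}=0$ (as $I$ is generated in degree $d$) we have $[A]_{d-1}=[R]_{d-1}$, which has dimension $\binom{n+d-1}{n}$, while $\dim[A]_d=\binom{n+d}{d}-r$. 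The failure of WLP in degree $d-1$ therefore means the map $\times L$ is neither injective nor surjective; given the numerics and the hypothesis $r\le\binom{n+d-1}{n-1}$ one checks that $\binom{n+d-1}{n}\ge\binom{n+d}{d}-r$, so maximal rank would mean surjectivity, and failure of WLP is equivalent to non-surjectivity of $\times L$ for general $L$.

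Next I would prove $(1)\Leftrightarrow(2)$. The cokernel of $\times L\colon[R]_{d-1}\to[R]_d/[I]_d$ is $[R]_d/([I]_d+L[R]_{d-1})$. Now $L[R]_{d-1}$ is precisely the space of degree-$d$ forms vanishing on the hyperplane $H=\{L=0\}$, so $[R]_d/L[R]_{d-1}$ is canonically $[R_H]_d$, the degree-$d$ forms on $H\cong\PP^{n-1}$. Under this quotient, $[I]_d$ maps to the span of the restrictions $F_1|_H,\dots,F_r|_H$. Hence the cokernel of $\times L$ vanishes if and only if $F_1|_H,\dots,F_r|_H$ span $[R_H]_d$; and since $r\le\binom{n+d-1}{n-1}=\dim[R_H]_d$, spanning is equivalent to linear independence of the $F_i|_H$. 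So non-surjectivity, i.e.\ failure of WLP in degree $d-1$, is equivalent to the $F_i|_H$ being linearly dependent for general $H$, which is exactly statement $(2)$.

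Finally I would prove $(2)\Leftrightarrow(3)$. The map $\varphi_{(I^{-1})_d}$ parametrizes the image in the projective space whose dual coordinates are a basis of $(I^{-1})_d$. By the theory of osculating spaces to Veronese-type embeddings (as in Togliatti's original analysis and its reformulation in \cite{MMO}), the variety $X=\overline{\Image\varphi_{(I^{-1})_d}}$ satisfies a Laplace equation of order $d-1$ at a general point exactly when its $(d-1)$st osculating space fails to have the expected dimension, which by a standard computation translates into a linear dependence among the partial derivatives of the parametrizing forms of order $d-1$; dualizing via Macaulay duality, these partials of order $d-1$ of the generators of $(I^{-1})_d$ correspond precisely to the restrictions of the $F_i$ to a general hyperplane. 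So the expected-dimension failure of the osculating space is equivalent to the linear dependence in $(2)$. I expect the main obstacle to be carefully tracking the several dualities — Macaulay inverse systems, the apolarity pairing, and the osculating-space computation — so that the "linear dependence of $F_i|_H$" statement lines up on the nose with the "one Laplace equation of order $d-1$" statement, keeping the dimension count $r\le\binom{n+d-1}{n-1}$ in play at each step. Much of this is already done in \cite{MMO}, so I would cite that for the delicate osculating-space part and supply the elementary WLP/restriction argument in detail.
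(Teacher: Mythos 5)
The paper itself gives no argument for this theorem: the ``proof'' is a one-line citation of \cite[Theorem 3.2]{MMO}, so the comparison is really between your sketch and that reference. Your treatment of $(2)\Leftrightarrow(3)$ defers to \cite{MMO} for the osculating-space computation, which is acceptable and is in effect what the paper does. The genuine problem is in the part you work out yourself, the equivalence $(1)\Leftrightarrow(2)$: you have the dimension count backwards. Since $I$ is generated in degree $d$ we have $\dim [A]_{d-1}=\binom{n+d-1}{d-1}$ and $\dim [A]_d=\binom{n+d}{d}-r$, and the hypothesis $r\le\binom{n+d-1}{n-1}$ gives $\binom{n+d}{d}-r\ \ge\ \binom{n+d}{d}-\binom{n+d-1}{n-1}=\binom{n+d-1}{d-1}$, i.e.\ the target is at least as large as the source. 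Hence maximal rank means \emph{injectivity}, not surjectivity. Your claimed inequality $\binom{n+d-1}{n}\ge\binom{n+d}{d}-r$ is equivalent to $r\ge\binom{n+d-1}{n-1}$, which is the reverse of the hypothesis (they agree only in the boundary case $r=\binom{n+d-1}{n-1}$, e.g.\ Togliatti's cubic example).

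This error propagates: you then claim the cokernel of $\times L$ vanishes iff the $F_i|_H$ span $[R_H]_d$, and that spanning is equivalent to independence because $r\le\dim [R_H]_d$. But $r$ vectors in a space of dimension strictly larger than $r$ can never span while being perfectly independent, so for $r<\binom{n+d-1}{n-1}$ your criterion would declare \emph{every} such ideal a Togliatti system, which is false. The repair is standard: compute the kernel rather than the cokernel. From $\cok(\times L)\cong [R_H]_d/\langle F_1|_H,\dots,F_r|_H\rangle$ and the dimension count above one gets $\dim\ker(\times L)=r-\dim\langle F_1|_H,\dots,F_r|_H\rangle$, so non-injectivity of $\times L$ for general $L$ --- which, with the corrected inequality, is exactly the failure of the WLP in degree $d-1$ --- is precisely the linear dependence of the restrictions $F_i|_H$. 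With that fix your argument for $(1)\Leftrightarrow(2)$ is correct and coincides with the one in \cite{MMO}.
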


\begin{proof} See \cite[Theorem 3.2]{MMO}.
\end{proof}

The above result motivates the following definition:

\begin{defn} \label{togliattisystem} \rm Let $I\subset R$ be an artinian
ideal
generated
by $r$ forms $F_1,...,F_{r}$ of degree $d$, $r\le \binom{n+d-1}{n-1}$.
We say:
\begin{itemize}
\item[(i)] $I$  is a \emph{Togliatti system}
if it satisfies the three equivalent conditions in  Theorem \ref{teathm}.

\item[(ii)]   $I$ is a \emph{monomial Togliatti system} if,
in addition, $I$ (and hence $I^{-1}$) can be generated by monomials.

\item[(iii)]   $I$ is a \emph{smooth Togliatti system} if,
in addition, the $n$-dimensional   variety
$X$ is smooth.

\item[(iv)] A  monomial  Togliatti system $I$ is
\emph{minimal} if $I$ is generated by monomials $m_1, \dotsc ,m_r$ and
there is no proper subset $m_{i_1}, \dotsc ,m_{i_{r-1}}$ defining a
monomial  Togliatti system.
    \end{itemize}
\end{defn}

The names are in honor of Eugenio Togliatti who proved that for $n=2$ the only
smooth  Togliatti system of cubics is
$I=(x_0^3,x_1^3,x_2^3,x_0x_1x_2)\subset k[x_0,x_1,x_2]$ (see  \cite{T1}, \cite{T2}). This result has been reproved recently by Brenner and Kaid \cite{BK} in the context of weak Lefschetz property.
Indeed,  Togliatti  gave a classification of rational surfaces parameterized by cubics and satisfying at least one Laplace equation of order $2$: There is only one rational surface in $\PP^5$ parameterized by cubics and satisfying a Laplace equation of order 2; it is obtained from the $3$rd Veronese embedding $V(2,3)$ of $\PP^2$ by a suitable projection from four  points on it. In \cite{MMO}, the first author together with Mezzetti and Ottaviani
 classified all smooth rational 3-folds parameterized by cubics and satisfying a Laplace equation of order 2, and gave a conjecture to extend this result to varieties of higher dimension. This conjecture has been recently proved in \cite{MRM}. Indeed, the first author together with Micha{\l}ek  classified all smooth minimal Togliatti systems of quadrics and cubics. For $d\ge 4$, the picture becomes soon much more involved than in the case of quadrics and cubics, and for the moment a complete classification appears out of reach unless we introduce other invariants as, for example, the number of generators of $I$.



\section{The classification of Togliatti systems with $2n+3$ generators}
\label{minimalnumbergenerators}

From now on, we restrict our attention to  monomial artinian ideals
 $I\subset k[x_0,\ldots ,x_n]$, $n\ge 2$, generated by forms of degree $d\ge 4$. It is worthwhile
to recall that for monomial artinian ideals
to test the WLP there
is no need to consider a general linear form. In fact, we have

\begin{prop}
   \label{lem-L-element}
Let $I \subset R:=k[x_{0},\dotsc,x_{n}]$ be an artinian monomial ideal.
Then $R/I$ has the WLP if and only if  $x_0+x_1 + \dotsb + x_n$
is a Lefschetz element for $R/I$.
\end{prop}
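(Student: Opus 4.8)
The plan is to exploit the torus action on the monomial ideal to reduce an arbitrary general linear form to the particular form $\ell=x_0+\cdots+x_n$. Since $I$ is a monomial ideal, the algebra $R/I$ carries an action of the torus $T=(k^*)^{n+1}$ by rescaling the variables: for $t=(t_0,\dots,t_n)\in T$, the substitution $x_i\mapsto t_ix_i$ is an automorphism of $R$ sending $I$ to itself, hence induces a graded automorphism $\phi_t$ of $R/I$. First I would record the obvious compatibility: for a general linear form $L=\sum a_ix_i$ with all $a_i\neq 0$, choosing $t_i=a_i$ gives $\phi_t(x_0+\cdots+x_n)=\sum a_ix_i=L$, so the multiplication maps $\times L:[R/I]_{j}\to[R/I]_{j+1}$ and $\times(x_0+\cdots+x_n):[R/I]_{j}\to[R/I]_{j+1}$ are intertwined by the isomorphisms $\phi_t$ in degrees $j$ and $j+1$. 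Consequently $\times L$ has maximal rank in degree $j$ if and only if $\times(x_0+\cdots+x_n)$ does.

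The forward implication is then immediate: if $R/I$ has the WLP, there is some $L\in[R/I]_1$ with $\times L$ of maximal rank in every degree; by semicontinuity (or by the standard fact that the locus of Lefschetz elements is Zariski open) we may take $L$ general, i.e.\ with all coefficients nonzero, and the intertwining above transports maximal rank to $x_0+\cdots+x_n$. The converse is trivial, since exhibiting one linear form that is Lefschetz is exactly the definition of the WLP. The only genuine point requiring care is the reduction "there exists a Lefschetz element $\Rightarrow$ a general linear form is Lefschetz," which I would justify by noting that for each degree $j$ the set of $L\in[R/I]_1$ for which $\times L$ fails maximal rank is closed (it is where certain minors of the multiplication matrix vanish), so its complement, if nonempty, is dense; intersecting over the finitely many relevant degrees $j$ keeps the Lefschetz locus dense, hence meeting the open set where all coefficients are nonzero.

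The main — and really only — obstacle is the degenerate case where the general Lefschetz element still does not have all coordinates nonzero, which cannot happen for a \emph{general} form but must be addressed if one wants the statement for \emph{every} Lefschetz element; since the definition only asks for existence of one Lefschetz element and we are free to perturb it within the dense Lefschetz locus, this is not an obstruction to the proposition as stated. I would also remark that the argument uses nothing about $I$ being artinian beyond making the graded pieces finite-dimensional (so that "maximal rank" makes sense), and nothing about the degree of the generators, so the statement holds in the generality claimed.
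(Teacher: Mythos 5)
Your argument is correct and is essentially the standard proof of this fact: the paper itself only cites \cite[Proposition 2.2]{MMN2}, and the proof given there is exactly your torus-rescaling argument ($x_i\mapsto a_ix_i$ preserves the monomial ideal and intertwines $\times(x_0+\cdots+x_n)$ with $\times\sum a_ix_i$), combined with the openness of the maximal-rank locus to pass from one Lefschetz element to a general one. Nothing is missing.
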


\begin{proof} See \cite[Proposition 2.2]{MMN2}.
\end{proof}

Let $I\subset k[x_{0},\dotsc,x_{n}]$ be a minimal  monomial  Togliatti
systems   of forms of degree $d$ and
 denote by
$\mu (I)$ the minimal number of generators of $I$. In \cite{MM}, the first author and Mezzetti proved:

$$2n+1\le \mu(I)\le \binom{n+d-1}{n-1}.$$

\noindent In addition,   the Togliatti systems with number of generators reaching the lower bound
or close to the lower bound were classified. Indeed, we have

\begin{thm}
 Let $I\subset k[x_{0},\dotsc,x_{n}]$ be a minimal  monomial
Togliatti system of forms of degree $d\ge 4$.
 Assume that $\mu(I)=2n+1$. Then, up to
a permutation of the coordinates, one of the following cases holds:
 \begin{itemize}
 \item[(i)] $n\ge 2$ and $I=(x_1^d, \dotsc, x_n^d)+x_0^{d-1}(x_0,
\dotsc, x_n)$, or
 \item[(ii)] $(n,d)=(2,5)$ and
$I=(x_0^5,x_1^5,x_2^5,x_0^3x_1x_2,x_0x_1^2x_2^2)$, or
  \item[(iii)]  $(n,d)=(2,4)$ and
$I=(x_0^4,x_1^4,x_2^4,x_0x_1x_2^2,x_0^2x_1^2)$.
 \end{itemize}

  Furthermore, (i) and (ii) are smooth while  (iii) is not smooth.
\end{thm}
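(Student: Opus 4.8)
The plan is to characterize minimal monomial Togliatti systems of forms of degree $d \geq 4$ with exactly $2n+1$ generators by combining the known lower bound $\mu(I) \geq 2n+1$ with a careful combinatorial analysis of which monomials can occur. First I would invoke the combinatorial machinery of Perkinson (via Propositions \ref{Hypersurface_Laplace} and \ref{smoothness}, and the reduction in Proposition \ref{lem-L-element} to the specific linear form $x_0 + \dotsb + x_n$): a minimal monomial artinian ideal $I$ generated in degree $d$ is a Togliatti system exactly when the multiplication map $\times(x_0+\dotsb+x_n) \colon [A]_{d-1} \to [A]_d$ fails to be injective, equivalently when the $\mu(I) = 2n+1$ generators, viewed as lattice points of $d\Delta_n$, together with the requirement that $[I]_{d-1} = 0$ (so that $I$ is minimally generated in degree $d$ and artinian forces the pure powers $x_i^d$ to lie in $I$), produce a linear syzygy with the $\pm 1$ incidence structure coming from $x_0+\dotsb+x_n$. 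I would set up the Macaulay inverse system picture so that the toric variety $X_{n,d}^I$ is the projection of $V(n,d)$ and translate the Laplace-equation condition into the statement that the $2n+1$ monomials form what is essentially a minimal non-trivial circuit for the relevant matrix.

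Next I would argue that artinianness forces $x_0^d, x_1^d, \dotsc, x_n^d \in I$, which already accounts for $n+1$ of the $2n+1$ generators; so exactly $n$ further monomials $m_1, \dotsc, m_n$, none of them a pure power, remain to be determined, and minimality forbids any proper subset from already being a Togliatti system. I would then study the bipartite incidence relation on $d\Delta_n$ induced by $\times(x_0 + \dotsb + x_n)$ and show that, for the map $[A]_{d-1} \to [A]_d$ to drop rank while $\mu(I)$ stays at $2n+1$, the monomials $m_1, \dotsc, m_n$ must be ``linked in a chain'' compatible with the coordinate structure. Case (i), $m_i = x_0^{d-1} x_i$, is the generic solution: here $x_0^{d-1}(x_0, \dotsc, x_n) = (x_0^d, x_0^{d-1}x_1, \dotsc, x_0^{d-1}x_n)$ together with $x_1^d, \dotsc, x_n^d$ yields the classical ``star configuration'' syzygy, and one checks directly via Perkinson's criterion that this is a smooth Togliatti system for all $n \geq 2$, $d \geq 4$. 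The exceptional cases (ii) and (iii) arise only for $n = 2$, where $d\Delta_2$ is a triangle small enough (for $d = 4, 5$) that additional sporadic configurations of the two non-power monomials can close up a syzygy cycle; these I would pin down by a finite check over the finitely many monomials in $d\Delta_2$ of the relevant degrees.

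The hard part will be the exhaustive exclusion step: showing that for $n \geq 3$, or for $n = 2$ and $d \geq 6$, no configuration of the $n$ non-power generators other than $x_0^{d-1}x_1, \dotsc, x_0^{d-1}x_n$ (up to permuting coordinates) can yield a \emph{minimal} Togliatti system with exactly $2n+1$ generators. The natural strategy is to track the ``interior'' lattice points of $d\Delta_n$ that each monomial $m_i$ and its translate $m_i \cdot$ (some variable) hit, and to show that any rank-dropping linear syzygy on only $2n+1$ monomials is forced, by a counting/connectivity argument on the associated graph, to be the star-configuration syzygy; any deviation either requires more than $2n+1$ generators to stay artinian, or contains a proper Togliatti subsystem (contradicting minimality), or simply fails to produce a syzygy. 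I would organize this by the maximum exponent of $x_0$ (or of whichever variable appears most) among the $m_i$, and degenerate to the surface case $n = 2$ for the small-degree exceptions. Finally, the smoothness/non-smoothness assertions follow from applying the combinatorial smoothness criterion (Proposition \ref{smoothness}) to the three listed families: (i) and (ii) pass it, while for (iii) the point configuration produces a singular projected Veronese surface, which one verifies by a direct computation with the fan of $X_{2,4}^I$.
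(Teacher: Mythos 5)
First, note that the paper does not actually prove this theorem here: its ``proof'' is the citation to \cite[Theorem 3.7]{MeMR}, so there is no in-paper argument to match yours against. Judged on its own terms, your proposal correctly assembles the framework that the cited proof (and the rest of this paper) relies on: artinianness plus generation in a single degree $d$ forces $x_0^d,\dotsc,x_n^d$ to be among the generators, leaving exactly $n$ non-pure-power monomials to classify; Proposition \ref{lem-L-element} reduces the WLP test to the single linear form $x_0+\dotsb+x_n$; and Proposition \ref{Hypersurface_Laplace} converts the Togliatti and minimality conditions into the existence of a degree $d-1$ hypersurface through $A_I$ containing no further integral points of $d\Delta_n$ except possibly vertices. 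The verification that the three listed families are minimal Togliatti systems, and the smoothness claims via Proposition \ref{smoothness}, are routine as you indicate.

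The genuine gap is that the entire mathematical content of the theorem --- the exclusion of every other configuration of the $n$ extra monomials, for all $n\ge 2$ and $d\ge 4$ --- is announced but never carried out. ``A counting/connectivity argument on the associated graph'' forcing ``the star-configuration syzygy'' is not an argument until you specify the graph, the count, and why the count fails for every deviating configuration; likewise ``linked in a chain compatible with the coordinate structure'' is not a statement one can check. Compare with the proof of Theorem \ref{mainthm1} in this paper, which performs the analogous classification for $\mu(I)=7$, $n=2$: the real work consists of slicing $A_I$ by the hyperplanes $H_i^j$, repeatedly splitting off linear factors $L_i^j$ from the degree $d-1$ curve $F_{d-1}$ by counting points on each slice, and contradicting minimality in each of a long list of cases. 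Your plan would need an analogue of that factorization-and-counting machinery executed case by case (organized, say, by the maximal exponent of $x_0$ among the $m_i$, as you suggest and as Case 2 of Theorem \ref{mainthm1} does), plus a separate argument for $n\ge 3$ showing that no sporadic configurations survive; none of this is present, and the finite checks for $(n,d)=(2,4)$ and $(2,5)$ that are supposed to produce cases (ii) and (iii) are likewise only promised, not performed.
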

\begin{proof} See \cite[Theorem 3.7]{MeMR}.
\end{proof}

\begin{thm}
 Let $I\subset k[x_{0},\dotsc,x_{n}]$ be a smooth minimal  monomial
Togliatti system of forms of degree $d\ge 4$.
 Assume that $\mu(I)=2n+2$. Then, up to
a permutation of the coordinates, one of the following cases holds:
 \begin{itemize}
  \item[(i)] $n\ge 2$ and $I=(x_0^d, \dotsc, x_n^d)+m(x_0,
\dotsc, x_n)$ with $m=x_0^{i_0}x_1^{i_1}\dotsb x_n^{i_n}$ where $i_0\ge i_1\ge \dotsb \ge i_n\ge 0$, $i_2>0$ and $i_0+i_1+\dotsb +i_n=d-1$.
 \item[(ii)] $(n,d)=(2,5)$ and
$I=(x_0^5,x_1^5,x_2^5,x_0^3x_1x_2,x_0^2x_1^2x_2,x_0x_1^3x_2)$ or
$(x_0^5,x_1^5,x_2^5,x_0^3x_1x_2,$

\noindent$x_0x_1^3x_2,x_0x_1x_2^3)$ or $(x_0^5,x_1^5,x_2^5,x_0^2x_1^2x_2,x_0^2x_1x_2^2,x_0x_1^2x_2^2)$.
  \item[(iii)]  $(n,d)=(2,7)$ and
  $d=7$ and
$I=(x_0^7,x_1^7,x_2^7,x_0^3x_1^3x_2,x_0^3x_1x_2^3,x_0x_1^3x_2^3)$ or
$(x_0^7,x_1^7,x_2^7,$

\noindent$x_0^5x_1x_2,x_0x_1^5x_2,x_0x_1x_2^5)$,
$(x_0^7,x_1^7,x_2^7,x_0x_1x_2^5,x_0^3x_1^3x_2,x_0^2x_1^2x_2^3)$ or
$(x_0^7,x_1^7,x_2^7,x_0^4x_1x_2^2,$

\noindent$x_0^2x_1^4x_2,x_0x_1^2x_2^4)$.
 \end{itemize}

\end{thm}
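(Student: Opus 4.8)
This classification is due to Mezzetti and Mir\'o-Roig \cite{MeMR}; we outline a proof. Write $I=(m_1,\dots,m_{2n+2})$ with the $m_i$ monomials of degree $d$. Since $I$ is artinian, some power $x_j^N$ lies in $I$, hence some generator divides $x_j^N$; as every generator has degree $d$, that generator must be $x_j^d$. So, after relabelling, $m_i=x_i^d$ for $i=0,\dots,n$, and the remaining $n+1$ generators $g_1,\dots,g_{n+1}$ are monomials of degree $d$ which are not pure powers. Let $\phi\colon R\to S:=k[x_0,\dots,x_{n-1}]$ be the ring homomorphism $x_n\mapsto-(x_0+\dots+x_{n-1})$, i.e.\ reduction modulo the Lefschetz element $L=x_0+\dots+x_n$ of Proposition~\ref{lem-L-element}. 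By Theorem~\ref{teathm}, $I$ is a Togliatti system if and only if the forms $\phi(x_0^d),\dots,\phi(x_n^d),\phi(g_1),\dots,\phi(g_{n+1})\in[S]_d$ are $k$-linearly dependent, where $\phi(x_i^d)=x_i^d$ for $i<n$ and $\phi(x_n^d)=(-1)^d(x_0+\dots+x_{n-1})^d$; and $I$ is \emph{minimal} if and only if for each generator $m_i$ the ideal obtained by deleting $m_i$ is either non-artinian or has $\phi$-linearly independent generators. Smoothness of the toric variety $X$ attached to $(I^{-1})_d$ is detected by Perkinson's combinatorial criterion \cite{P} (Proposition~\ref{smoothness}) applied to the lattice polytope spanned by the exponent vectors of the degree-$d$ monomials not in $I$.

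First one checks that the ideals in (i)--(iii) are indeed smooth minimal monomial Togliatti systems. For family~(i) the identity $\sum_{j=0}^{n}\phi(mx_j)=\phi(m)\,\phi(L)=0$ gives the required dependence; since $i_2>0$ forces $m$ to involve $x_0,x_1,x_2$, all $2n+2$ monomials are distinct minimal generators, deleting any $x_j^d$ makes the ideal non-artinian, and deleting any $mx_j$ destroys the (one-dimensional) space of dependences, so $I$ is a minimal monomial Togliatti system; Perkinson's criterion then shows that $X$ is smooth precisely because $i_2>0$. The three sporadic families in (ii) and (iii) are handled by the same mechanism through a finite explicit computation: expand each $\phi(m_i)$ in the monomial basis of $[S]_d$, exhibit a dependence with every coefficient nonzero, and verify smoothness combinatorially.

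For the converse, let $I$ be a smooth minimal monomial Togliatti system of forms of degree $d\ge4$ with $\mu(I)=2n+2$, written as above. One expands the dependence $\sum_{i=0}^{n}a_i\phi(x_i^d)+\sum_{k=1}^{n+1}b_k\phi(g_k)=0$ in the monomial basis of $[S]_d$ and reads off the coefficient equations; comparing the coefficients of the monomials $x_i^d$ and $x_i^{d-1}x_{i'}$ constrains the supports and exponent vectors of the $g_k$, while minimality (every single deletion must fail to be a monomial Togliatti system, using non-artinianness when a pure power is dropped) controls which relations with some $b_k=0$ are compatible with minimality, as in family~(i). Carrying this out, for all but finitely many $(n,d)$ the only possibility is $\{g_1,\dots,g_{n+1}\}=\{mx_0,\dots,mx_n\}$ for a single degree-$(d-1)$ monomial $m$, i.e.\ family~(i); the coefficient equations acquire extra solutions only for $(n,d)=(2,5)$ and $(n,d)=(2,7)$ (the case $(2,4)$ not arising, since a Togliatti system must have at most $\binom{n+d-1}{n-1}$ generators, which fails for $2n+2$ when $(n,d)=(2,4)$), and enumerating these finitely many configurations and keeping the smooth ones via Perkinson's criterion yields exactly (ii) and (iii), the same criterion forcing $i_2>0$ in family~(i). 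The delicate point throughout is this last step: organizing the case analysis of the coefficient equations, including the relations with some $b_k=0$ such as the one in family~(i), and in $\PP^2$ the finite but intricate enumeration of the exceptional degree-$5$ and degree-$7$ configurations with the correct separation of smooth from non-smooth, all carried out up to permutation of the coordinates.
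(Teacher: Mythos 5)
The paper does not actually prove this theorem: its ``proof'' is the single line ``See \cite[Theorem 3.17 and Proposition 3.19]{MeMR}'', so the result is imported wholesale from the earlier paper of Mezzetti and Mir\'o-Roig. Your proposal correctly identifies this provenance and your overall strategy --- force the pure powers $x_i^d$ into the generating set by artinianness, restrict to the Lefschetz element $x_0+\dots+x_n$ via Proposition \ref{lem-L-element} and Theorem \ref{teathm}, and detect smoothness with Perkinson's criterion (Proposition \ref{smoothness}) --- is exactly the strategy used in \cite{MeMR} and in this paper's own Theorem \ref{mainthm1} for the $2n+3$ case. So the framework is right.

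The genuine gap is that the converse direction, which is the entire content of the theorem, is not proved: the sentence ``Carrying this out, for all but finitely many $(n,d)$ the only possibility is \dots'' asserts the conclusion of what is, in \cite{MeMR}, a multi-page case analysis (and what in this paper's analogous Theorem \ref{mainthm1} occupies the bulk of Section \ref{minimalnumbergenerators}, organized by slicing $A_I$ with the hyperplanes $H_i^j$ and repeatedly factoring the curve $F_{d-1}$). Nothing in your outline explains why no sporadic families occur for $n\ge 3$, why the exceptional degrees in $\PP^2$ are exactly $5$ and $7$, or why the listed ideals are the only survivors of the smoothness test; ``comparing the coefficients of $x_i^d$ and $x_i^{d-1}x_{i'}$'' is a reasonable opening move but does not by itself pin down the exponent vectors of the $g_k$. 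There are also smaller unsupported assertions in the forward direction: that the space of linear dependences among the restricted generators of family (i) is one-dimensional (needed for minimality when deleting some $mx_j$; this requires showing the restricted pure powers are independent modulo $\phi(m)\cdot[S]_1$, which is not automatic), and that Perkinson's criterion yields smoothness ``precisely because $i_2>0$''. As written, the proposal is a correct plan of attack plus a citation, not a proof.
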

\begin{proof} See \cite[Theorem 3.17 and Proposition 3.19]{MeMR}.
\end{proof}

In this paper, we address the first open case and we classify all   smooth
minimal monomial Togliatti systems $I\subset k[x_{0},\dotsc,x_{n}]$ of
forms of degree $d\ge 4$  with $\mu(I)=2n+3$ (see Theorem \ref{mainthm2}) as well as  all minimal monomial Togliatti systems $I\subset k[x_0,x_1,x_2]$ of
forms of degree $d\ge 6$ with $\mu(I)=7$ (see Theorem \ref{mainthm1}).

\vskip 4mm
In order to achieve this classification, we  associate to any artinian monomial ideal a polytope
and
we  tackle our problem with tools  coming from
combinatorics. In fact,
  the failure of the WLP of an artinian monomial ideal $I\subset k[x_{0},\dotsc,x_{n}]$ can be
established by purely  combinatoric properties of the associated
polytope $P_I$. To state this result we need to  fix some extra notation.

\vskip 2mm
Given an artinian monomial ideal
 $I\subset k[x_{0},\dotsc,x_{n}]$ generated by monomials  of degree $d$ and  its inverse
system $I^{-1}$, we denote by $\Delta _n$ the standard $n$-dimensional simplex
in the lattice $\ZZ^{n+1}$, we consider $d\Delta _n$ and we define the
polytope $P_I$ as the convex hull of the finite subset $A_I\subset
\ZZ^{n+1}$ corresponding to monomials of degree $d$  in $I^{-1}$. As
usual we define:
$$\text{ Aff}_{\ZZ}(A_I):=\{ \sum _{x\in A_I} n_x\cdot x \mid n_x\in
\ZZ, \quad \sum _{x\in A_I} n_x=1 \}$$
the sublattice $\text{ Aff}_{\ZZ}(A_I)$ in $\ZZ^{n+1}$
generated by $A_I$. We have the following criterion which will play an important role in the proof of our main result.

\begin{prop} \label{Hypersurface_Laplace}
Let $I\subset k[x_{0},\dotsc,x_{n}]$ be an artinian monomial ideal
generated by monomials of degree $d$. Assume $r\le \binom{n+d-1}{n-1}$.
Then, $I$ is a Togliatti system if and only if there exists a
hypersurface of degree $d-1$ containing  $A_I\subset \ZZ^{n+1}$.
 In addition, $I$ is a minimal Togliatti system
  if and only if any such hypersurface $F$  does not contain any
integral point of $d\Delta_n\setminus A_I$ except possibly  some of the
vertices of $d\Delta_n$.
\end{prop}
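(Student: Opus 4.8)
The plan is to combine Theorem~\ref{teathm} with Perkinson's combinatorial description \cite{P} of osculating spaces of projective toric varieties, and then to deduce the minimality clause by a ``drop one generator'' argument. For the first equivalence I would start from the observation that $A_I$ is the set of exponent vectors of the degree-$d$ monomials \emph{not} lying in $I$, so that $X=\overline{\Image\varphi_{(I^{-1})_d}}$ is the toric variety parametrized by the monomials $\{x^a : a\in A_I\}$, and $|A_I|=\dim[R/I]_d=\binom{n+d}{n}-r\ge\binom{n+d-1}{n}$, the last inequality being precisely the hypothesis $r\le\binom{n+d-1}{n-1}$. By Theorem~\ref{teathm}(3), $I$ is a Togliatti system iff $X$ satisfies a Laplace equation of order $d-1$, i.e. iff the osculating space of order $d-1$ of $X$ at a general point is deficient, which amounts to linear dependence among the $\binom{n+d-1}{n}$ derivative vectors spanning it. As the parametrization is monomial this osculating space is torus-equivariant, so one may compute at the point $(1,\dots,1)$ (which lies in the dense big orbit); there the spanning vectors are $v_\gamma=\big(a_1(a_1-1)\cdots(a_1-\gamma_1+1)\cdots a_n(a_n-1)\cdots(a_n-\gamma_n+1)\big)_{a\in A_I}$, indexed by $\gamma\in\ZZ_{\ge 0}^n$ with $|\gamma|\le d-1$ (reading $a_0=d-a_1-\cdots-a_n$). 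Since the falling-factorial polynomials $\prod_i y_i(y_i-1)\cdots(y_i-\gamma_i+1)$, $|\gamma|\le d-1$, form a basis of the space of polynomials of degree $\le d-1$ in $y_1,\dots,y_n$ (the transition to the monomial basis being unitriangular), a nonzero linear relation among the $v_\gamma$ is exactly a nonzero polynomial of degree $\le d-1$ vanishing on $A_I$; thus the osculating space is deficient iff $A_I$ lies on a hypersurface of degree $d-1$, which is the first assertion. The same conclusion also follows from Theorem~\ref{teathm}(1) and Proposition~\ref{lem-L-element}, since $I$ is Togliatti iff $\times(x_0+\cdots+x_n)\colon[R/I]_{d-1}\to[R/I]_d$ fails to be injective (injectivity being the relevant maximal-rank condition by the same dimension count) iff the linear system $\sum_{i:\,\alpha_i\ge 1}c_{\alpha-e_i}=0$, $\alpha\in A_I$, has a nonzero solution, and the falling-factorial substitution recasts this as a hypersurface through $A_I$.

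For the minimality clause I would note that the integral points of $d\Delta_n\setminus A_I$ are exactly the exponent vectors $a_1,\dots,a_r$ of the minimal generators $m_1,\dots,m_r$ of $I$, and analyze the effect of deleting one generator $m_i$: writing $I'=(m_1,\dots,\widehat{m_i},\dots,m_r)$ we have $A_{I'}=A_I\cup\{a_i\}$. If $m_i=x_j^d$ is one of the vertices of $d\Delta_n$, then no power of $x_j$ lies in $I'$, so $R/I'$ is not artinian and $I'$ is not a Togliatti system; deleting such an $m_i$ can therefore never violate minimality. If $a_i$ is not a vertex, then $R/I'$ is still artinian and $I'$ is generated by $r-1\le\binom{n+d-1}{n-1}$ forms of degree $d$, so the first part applies: $I'$ is a Togliatti system iff $A_I\cup\{a_i\}$ lies on a hypersurface of degree $d-1$. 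Hence $I$ is a minimal Togliatti system iff, for every non-vertex $a_i\in d\Delta_n\setminus A_I$, no hypersurface of degree $d-1$ passes through $A_I\cup\{a_i\}$; exchanging the quantifiers, this says precisely that every hypersurface of degree $d-1$ containing $A_I$ misses every point of $d\Delta_n\setminus A_I$ except possibly some vertices of $d\Delta_n$, as claimed.

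The substantive point will be the first equivalence, i.e. turning ``deficient osculating space of the toric variety'' into ``lattice points on a hypersurface'': one must justify the reduction to the torus-fixed evaluation point (legitimate because the osculating dimension is constant along the dense big orbit), carry out the unitriangular change to the falling-factorial basis, and --- the most delicate bookkeeping --- pin down what ``hypersurface of degree $d-1$'' means here, since all of $A_I$ lies on the hyperplane $\sum_i y_i=d$; concretely one uses polynomials of degree $\le d-1$ in the $n$ affine coordinates on that hyperplane, equivalently degree-$(d-1)$ forms in $y_0,\dots,y_n$ modulo $\sum_i y_i-d$. The rest --- the dimension count promoting ``maximal rank'' to ``injective'', and the quantifier shuffle in the minimality clause --- is routine.
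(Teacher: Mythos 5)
Your proposal is correct and takes essentially the same route as the paper, whose proof consists of citing Theorem \ref{teathm} together with Perkinson's Proposition 1.1 in \cite{P}: your torus-equivariant reduction to the point $(1,\dotsc,1)$ and the unitriangular falling-factorial change of basis is precisely the content of that cited result, and your drop-one-generator analysis (including the observation that deleting a vertex generator $x_j^d$ destroys artinianness, which is why vertices are exempted) is the intended unpacking of the minimality clause. The only soft spot is the sketched alternative derivation from condition (1) of Theorem \ref{teathm}, but since it is offered only as a cross-check it does not affect the argument.
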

\begin{proof} This follows from Theorem \ref{teathm} and \cite[Proposition 1.1]{P}.
\end{proof}

\begin{ex} \rm
The artinian monomial ideal $$I=(x_0,x_1)^3+(x_2,x_3)^3+(x_4^3,x_0x_2x_4,x_0x_3x_4,x_1x_2x_4,x_1x_3x_4)\subset
k[x_0,x_1,x_2,x_3,x_4]$$  defines a minimal monomial Togliatti system of
cubics. In fact, the set  $A_I\subset \ZZ^5$ is:
{\small $$A_I=\{ (2,0,1,0,0), (1,0,2,0,0),(2,0,0,1,0),(1,0,0,2,0),(2,0,0,0,1),(1,0,0,0,2),
 (0,2,1,0,0),$$
$$
(0,1,2,0,0),
(0,2,0,1,0),(0,1,0,2,0),(0,2,0,0,1),(0,1,0,0,2),(0,0,2,0,1),(0,0,1,0,2),(0,0,0,2,1),$$
$$(0,0,0,1,2),
(1,1,1,0,0),(1,1,0,1,0),(1,1,0,0,1),(1,0,1,1,0),(0,1,1,1,0),(0,0,1,1,1)\}.$$}
There is a quadric,  and only one, containing all points of
$A_I$ and no integral point of $4\Delta_4\setminus A_I$, namely,
$$Q(x_0,x_1,x_2,x_3,x_4)=2\sum _{i=0} ^4x_{i}^2+9(x_0x_1+x_2x_3)-5\sum _{0\le i<j\le 4}x_{i}x_{j}.$$
\end{ex}

The following  criterion allows us to check if a subset $A$ of
points in the lattice $\ZZ^{n+1}$ defines a smooth toric variety $X_A$
or not.

\begin{prop} \label{smoothness}
 Let $I\subset k[x_{0},\dotsc,x_{n}]$ be an artinian monomial ideal
generated by $r$ monomials  of degree $d$. Let $A_I\subset \ZZ^{n+1}$ be the
set of integral points corresponding to monomials in $(I^{-1})_d$, $S_I$
the semigroup generated by $A_I$ and 0, $P_I$ the convex hull of $A_I$
and $X_{A_I}$ the projective toric variety associated to the polytope $P_I$. Then
 $X_{A_I}$ is smooth if and only if for any non-empty face $\Gamma $ of
$P_I$ the following conditions hold:

 (i) The semigroup $S_I/\Gamma$ is isomorphic to $\ZZ^m_+$ with
$m=\dim(P_I)-\dim \Gamma +1$.

 (ii) The lattices $\ZZ^{n+1}\cap \text{ Aff}_{\RR}(\Gamma )$ and
$\text{ Aff}_{\ZZ}(A_I\cap \Gamma)$ coincide.
\end{prop}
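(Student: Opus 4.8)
The plan is to reduce the smoothness of the projective toric variety $X_{A_I}$ to a local question at each torus orbit, and then to a purely combinatorial statement about semigroups; this is classical toric geometry (cf. \cite{P} and the references there). First I would invoke the orbit--face correspondence: by construction $X_{A_I}$ is the closure of the image of the monomial parametrization attached to $A_I$, hence an irreducible toric variety on which the torus $T=(k^{*})^{\dim P_I}$ acts with orbits $O_\Gamma$ in inclusion-reversing bijection with the nonempty faces $\Gamma$ of $P_I$, where $\dim O_\Gamma=\dim\Gamma$. For each face $\Gamma$ there is a $T$-stable affine open $U_\Gamma\subseteq X_{A_I}$ whose unique closed orbit is $O_\Gamma$: for a vertex $v$ this is the dehomogenization of $X_{A_I}$ at the coordinate $x_v$, i.e. $\operatorname{Spec}k[\,\ZZ_{\ge 0}(A_I-v)\,]$, and in general $U_\Gamma=\operatorname{Spec}k[S_I/\Gamma]$, where $S_I/\Gamma$ is the transverse monoid obtained from $S_I$ by inverting the elements of $A_I$ lying on $\Gamma$ and passing to the quotient by the resulting group of units. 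Since every nonempty face contains a vertex, the vertex charts already cover $X_{A_I}$; hence $X_{A_I}$ is smooth if and only if each $U_\Gamma$ is smooth, and, $T$ acting transitively on $O_\Gamma$, this is equivalent to regularity of $\operatorname{Spec}k[S_I/\Gamma]$ at any point of its closed orbit, for every $\Gamma$.

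Next I would analyze when $\operatorname{Spec}k[S_I/\Gamma]$ is regular at that point. The key input is the standard fact that an affine semigroup ring $k[S]$ is regular exactly when the (affine) monoid $S$ is free, the cotangent space at the distinguished point having dimension equal to the cardinality of the Hilbert basis of $S$; regularity forces this number to be the Krull dimension, which pins the monoid to $\ZZ^m_+$ with $m=\dim(P_I)-\dim\Gamma+1$ (the shift by $1$ coming from the passage between $P_I$ and the cone over it). Unwinding this for $S=S_I/\Gamma$ gives condition (i): the transverse cone at $\Gamma$ is simplicial and unimodular for the transverse lattice. The remaining subtlety is that $S_I/\Gamma$ is a \emph{quotient} monoid: its groupification is $\ZZ^{n+1}$ modulo the sublattice spanned by the differences $a-a'$ with $a,a'\in A_I\cap\Gamma$, and $A_I\cap\Gamma$ may fail to generate all integral points of $\operatorname{Aff}_{\RR}(\Gamma)$, which introduces non-saturation not detected by the generators on the transverse side. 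Condition (ii) is precisely the requirement that eliminates this, i.e. that along $O_\Gamma$ the parametrization is unramified (so that $X_{A_I}$ is normal there and coincides with its normalization). Checking that (i) and (ii) together are equivalent to regularity of $k[S_I/\Gamma]$ at its closed orbit, and that both are genuinely needed, then running this over all faces $\Gamma$, yields the stated equivalence; the converse is immediate, since when (i) and (ii) hold $U_\Gamma$ is a product of a torus and an affine space, hence smooth.

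The technical heart, and the step I expect to be the main obstacle, is the last one: showing precisely how local smoothness of the possibly non-normal toric variety $X_{A_I}$ along $O_\Gamma$ decomposes into the unimodularity statement (i) and the lattice-saturation statement (ii), and verifying that neither is redundant. Closely tied to this is the dimension and convention bookkeeping — the relation between $\dim\Gamma$, $\dim P_I$, the rank of the transverse monoid $S_I/\Gamma$, and the shift producing $m=\dim(P_I)-\dim\Gamma+1$, as well as the implicit normalization of $A_I$ under which ``$X_{A_I}$ smooth'' is being read. Everything else reduces to the standard orbit--face dictionary for projective toric varieties and the classical criterion that an affine semigroup ring is regular if and only if its defining monoid is free.
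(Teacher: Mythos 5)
The paper does not actually prove this proposition: its ``proof'' is a citation to \cite[Chapter 5, Corollary 3.2]{GKZ}, supplemented only by the remark that $X_{A_I}$ coincides with the closure of the image of the rational map $\varphi_{(I^{-1})_d}$. What you have written is therefore not an alternative to the paper's argument but a sketch of the proof of the cited GKZ result itself. Your architecture is the standard and correct one: the orbit--face correspondence, the covering of $X_{A_I}$ by the torus-stable affine charts $U_\Gamma=\operatorname{Spec}k[S_I/\Gamma]$ (with the vertex charts already sufficing), the reduction by torus-transitivity to regularity at a single point of the closed orbit, and the fact that an affine semigroup ring is regular precisely when the monoid is free, which accounts for condition (i) and the shift $m=\dim(P_I)-\dim\Gamma+1$ coming from the cone over $P_I$. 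This is exactly how GKZ proceed.

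The one substantive reservation is that you explicitly defer the step that carries all the content: the verification that regularity of $k[S_I/\Gamma]$ at its distinguished point is equivalent to the conjunction of (i) and (ii), with (ii) accounting for the failure of $A_I\cap\Gamma$ to generate the full lattice $\ZZ^{n+1}\cap\operatorname{Aff}_{\RR}(\Gamma)$ --- a non-saturation phenomenon invisible to the transverse monoid alone. As written, your text correctly identifies where this subtlety lives but does not resolve it, so the proposal is a faithful plan rather than a complete proof. Since that equivalence is precisely the statement of the GKZ corollary the paper leans on, you have in effect reconstructed the right reduction without supplying the part the paper outsources; to make this self-contained you would need to carry out the local computation of the cotangent space of $k[S_I/\Gamma]$ and exhibit how a failure of (ii) obstructs regularity even when (i) holds (e.g.\ via a non-normal edge). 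You should also make explicit, as the paper does, the identification of $X_{A_I}$ with the closure of the image of $\varphi_{(I^{-1})_d}$, since that is what licenses reading ``$X$ smooth'' in Definition \ref{togliattisystem} as ``$X_{A_I}$ smooth.''
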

\begin{proof} See \cite[Chapter 5, Corollary 3.2]{GKZ}. Note that in
this case  $X_{A_I}=X$ where $X$ is the closure of the image of the rational map $\varphi _{[I^{-1}]_d}:\PP^n \longrightarrow\PP^{\binom{n+d}{d}-r-1}$.
\end{proof}

\vskip 2mm
As a direct application of this criterion we get:

\begin{ex} \rm
Let
 $$I=(x_0,x_1)^3+(x_2,x_3)^3+(x_4^3,x_0x_2x_4,x_0x_3x_4,x_1x_2x_4,x_1x_3x_4)\subset
k[x_0,x_1,x_2,x_3,x_4]$$  be the  minimal monomial Togliatti system of
cubics described in Example 3.5. Applying the above smoothness criterion we get that the toric variety $X_{A_I}$ is smooth.
\end{ex}

For any integer $d\ge 3$, we define $M(d):=\{ x_0^ax_1^bx_2^c \mid a+b+c=d \text{ and } a,b,c\le d-1\}$ and we consider the sets of ideals:

{\small $$A=\{(x_0^2x_2,x_0x_1^2,x_1^3,x_1x_2^2),(x_0^2x_2,x_0x_1x_2,x_1^3,x_1^2x_2),(x_0^2x_2,x_0x_1x_2,x_1^3,x_1x_2^2), (x_0^2x_1,x_0x_2^2,x_1^3,x_1^2x_2),$$
$$(x_0^2x_1,x_0x_2^2,x_1^3,x_1x_2^2), (x_0^2x_2,x_0x_2^2,x_1^3,x_1x_2^2), (x_0x_1^2,x_0x_2^2,x_1^3,x_1x_2^2),(x_0^2x_2,x_1^2x_2,x_1^3,x_2^3)  ,(x_0x_2^2,x_1^2x_2,x_1^3,x_2^3),$$
$$ (x_0^2x_2,x_0x_1^2,x_1^2x_2,x_1x_2^2),\! (x_0x_1^2,x_0x_2^2,x_1^2x_2,x_1x_2^2),\!(x_0^2x_1,x_0x_1^2,x_1^3,x_2^3),\!(x_0^2x_1,x_1^2x_2,x_1^3,x_2^3),\! (x_1^2x_2,x_1x_2^2,x_1^3,x_2^3),$$
$$ (x_0x_2^2,x_1^2x_2,x_1x_2^2,x_1^3),\!(x_0x_1x_2,x_0x_2^2,x_1^3,x_1x_2^2),\!(x_0^2x_2,x_0x_2^2,x_1^3,x_1^2x_2),\!(x_0x_1^2,x_0x_2^2,x_1^3,x_2^3),(x_0^2x_2,x_0x_1^2,x_1^3,x_2^3),$$
$$ (x_0x_1x_2,x_0x_2^2,x_1^3,x_1^2x_2),(x_0^2x_2,x_1^2x_2,x_1x_2^2,x_1^3) \},$$}
{\small $$B=\{(x_0^2x_1x_2,x_0x_2^3,x_1^4,x_1^3x_2),(x_0^3x_2,x_0x_1^2,x_2,x_1^4,x_1x_2^3),(x_0^2x_2^2,x_0x_1^2x_2,x_1^4,x_2^4),(x_0^2x_1x_2,x_1^2x_2^2,x_1^4,x_2^4)\}$$}
\noindent and
$$C=\{(x_0^2x_1x_2^2,x_0x_1^3x_2,x_1^5,x_2^5),(x_0^3x_1x_2,x_0x_1^2x_2^2,x_1^5,x_2^5)\}.$$

\begin{thm}\label{mainthm1}
Let $I\subset k[x_0,x_1,x_2]$ be a minimal  monomial
Togliatti system of forms of degree $d\ge 10$.
 Assume that $\mu(I)=7$. Then,  up to
a permutation of the coordinates, one of the following cases holds:
 \begin{itemize}
 \item[(1)] $I=(x_0^d,x_1^d, x_2^d)+m(x_0^2,x_1^2,x_0x_2,x_1x_2)$ with $m\in M(d-2)$, or
 \item[(2)] $I=(x_0^d,x_1^d, x_2^d)+m(x_0^2,x_1^2,x_0x_1,x_2^2)$ with $m\in M(d-2)$, or
 \item[(3)]  $I=(x_0^d,x_1^d, x_2^d)+m(x_0^3,x_1^3,x_2^3,x_0x_1x_2)$ with $m\in M(d-3)$, or
 \item[(4)] $I=(x_0^d,x_1^d, x_2^d)+x_0^{d-3}J $ with $J\in A$, or
 \item[(5)] $I=(x_0^d,x_1^d, x_2^d)+x_0^{d-4}J $ with $J\in B$, or
 \item[(6)] $I=(x_0^d,x_1^d, x_2^d)+x_0^{d-5}J $ with $J\in C$.
 \end{itemize}
\end{thm}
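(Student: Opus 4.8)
The plan is to combine the combinatorial translation of the Togliatti property (Proposition~\ref{Hypersurface_Laplace}) with an exhaustive but organized case analysis on the possible configurations of the $7$ lattice points $A_I \subset d\Delta_2$. Write $I^{-1}$ as generated by $7$ monomials of degree $d$ in three variables; by Proposition~\ref{Hypersurface_Laplace}, $I$ is a minimal Togliatti system if and only if there is a plane curve $F$ of degree $d-1$ passing through all $7$ points of $A_I$ and through no other integral point of $d\Delta_2$, except possibly the three vertices $dX_0,dX_1,dX_2$. The first step is to observe that the three vertices of $d\Delta_2$ play a special role: a standard argument (as in \cite{MeMR}) shows that, up to permutation of coordinates, we may always assume the vertices $(d,0,0),(0,d,0),(0,0,d)$ lie in $A_I$ when $\mu(I)$ is small, so that $x_0^d,x_1^d,x_2^d \in I^{-1}$ and hence the pure powers $x_0^d,x_1^d,x_2^d$ are among the generators of $I$; this reduces us to placing the remaining $4$ points and accounts for the common summand $(x_0^d,x_1^d,x_2^d)$ in all six cases. (For $d \ge 10$ the hypothesis $7 \le \binom{d+1}{1} = d+1$ is automatic, so Proposition~\ref{Hypersurface_Laplace} applies.)

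\textbf{Case division.} With the three vertices fixed, I would split according to how the $4$ remaining points distribute among the edges and interior of $d\Delta_2$, exactly as announced in the introduction: the ``two basic cases'' are whether or not the curve $F$ of degree $d-1$ is forced to contain one of the full edge lines of $d\Delta_2$. If $F$ contains, say, the line $\{x_2=0\}$, then $F = x_2 \cdot G$ with $\deg G = d-2$, and the combinatorial condition on $G$ reduces the problem to one about $6$ remaining points lying on a curve of degree $d-2$; iterating this ``peeling'' of coordinate lines produces the families parametrized by a monomial multiplier $m \in M(d-2)$, $M(d-3)$, and the stripped ideals $x_0^{d-3}J$, $x_0^{d-4}J$, $x_0^{d-5}J$ with $J$ in the finite lists $A$, $B$, $C$ — these are precisely cases (1)--(6). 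If $F$ contains no coordinate line, then $F$ is an honest irreducible (or at least edge-line-free) curve of degree $d-1$ meeting each edge of $d\Delta_2$ in at most $d-1$ lattice points with prescribed incidences; here the minimality condition (no extra integral points of $d\Delta_2$ on $F$) becomes very restrictive, and I would show that for $d \ge 10$ no new configuration survives. Throughout, the constraint is linear in the coefficients of $F$: passing through $7$ prescribed points is $7$ linear conditions on the $\binom{d+1}{2}$ coefficients, while avoiding each of the (many) other lattice points is an open condition, so one argues by dimension counts on the relevant linear systems and then checks minimality configuration by configuration.

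\textbf{Carrying out the subcases.} For each coarse case I would further subdivide by the multiplicities with which the remaining points sit on the three edges (a partition of $4$ into at most three edge-parts plus interior points), and for each such distribution write down the induced linear system of curves of degree $d-1$ through those points. Using that the restriction of $F$ to each edge line $\ell_i \cong \PP^1$ is a binary form of degree $d-1$ vanishing at the marked points, one reduces to a short finite check: either the marked points on $\ell_i$ already force $\ell_i \subset F$ (peeling case), or they impose enough conditions that $F\cap\ell_i$ consists of exactly the prescribed lattice points, in which case minimality pins down the configuration up to the stated monomial shifts. The finite lists $A$, $B$, $C$ arise as the residual configurations after maximal peeling, where the ``core'' $4$ monomials sit in a small simplex ($3\Delta_2$, $4\Delta_2$, $5\Delta_2$ respectively) and must themselves satisfy the Togliatti criterion there; one enumerates these directly (this is the computational heart, aided by \cocoa-type calculations) and verifies each is genuinely minimal and that the multiplier $x_0^{d-k}$ preserves both the Togliatti and minimality properties.

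\textbf{Main obstacle.} The delicate part is the no-coordinate-line case and, within the peeling cases, proving that the enumerated lists $A$, $B$, $C$ are \emph{complete}: one must rule out every other placement of the four residual points, which requires showing that the associated degree-$(d-1)$ interpolation curve either fails to exist with the required vanishing, or necessarily passes through a forbidden integral point of $d\Delta_2$, thereby violating minimality. Controlling this uniformly for all $d \ge 10$ — as opposed to finitely many small $d$ — is what forces the careful subdivision into subcases and the bound $d\ge 10$; the low-degree cases $6 \le d \le 9$ are then handled separately by direct computation, as indicated in the introduction.
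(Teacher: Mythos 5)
There is a genuine gap, and it sits at the very foundation of your strategy: you have misidentified the set $A_I$ and thereby inverted the combinatorial criterion. In Proposition~\ref{Hypersurface_Laplace}, $A_I$ is the set of lattice points of $d\Delta_2$ corresponding to the monomials of degree $d$ in the \emph{inverse system} $I^{-1}$, i.e.\ the $\binom{d+2}{2}-7$ monomials \emph{not} in $I$ — not the $7$ generators. So the Togliatti condition is that a curve $F_{d-1}$ of degree $d-1$ passes through all $\binom{d+2}{2}-7$ points of the complement of the generators, and minimality means $F_{d-1}$ avoids the $4$ non-pure-power generators (the three vertices, which correspond to $x_0^d,x_1^d,x_2^d\in I$, are exempt). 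With your reading — a curve through the $7$ generator points avoiding everything else — the existence is $7$ linear conditions on $\binom{d+1}{2}$ coefficients and is trivially satisfiable for $d\ge 10$, so your dimension counts would (wrongly) certify essentially every configuration as Togliatti; the actual condition imposes far more conditions than the dimension of the linear system, and that overdetermination is precisely what drives the classification. Relatedly, your claim that the vertices lie in $A_I$ so that $x_i^d\in I^{-1}$ and ``hence'' $x_i^d$ is a generator of $I$ is self-contradictory: the pure powers are generators because $I$ is artinian and generated in degree $d$, hence the vertices are \emph{not} in $A_I$, which is exactly why the minimality clause must exempt them.

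Your peeling idea is in the right spirit but is applied to the wrong set and to the wrong lines. The paper's argument first splits on whether some $x_j^2$ divides all four non-pure-power generators $m_1,\dots,m_4$ (if not, a direct restriction to $x_0+x_1+x_2=0$ shows, using $d\ge 10$, that $I$ is not Togliatti); in the remaining case it orders the exponents $a_1\ge a_2\ge a_3\ge a_4=s\ge 2$ of $x_0$ and peels the \emph{parallel} lines $\{t_0=0\},\{t_0=1\},\dots,\{t_0=s-1\}$ off $F_{d-1}$, since each such line carries more points of $A_I$ than the current degree (a Bézout count). It then runs a finer case analysis on the pattern of $a_1,\dots,a_4$ (Cases 2A--2D), repeatedly counting $\#(F\cap A_I\cap\{t_j=i\})$ against degrees to force further factorizations or contradictions with minimality; the lists $A$, $B$, $C$ fall out of this analysis rather than from a separate computer enumeration (computation enters only for $6\le d\le 9$, which is outside the theorem). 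To repair your proposal you would need to restate the criterion correctly, replace the interpolation/dimension-count heuristic by these line-by-line point counts on the complement, and organize the cases by the exponent patterns of the four generators rather than by edge incidences.
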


\begin{proof} It is easy to check that all of these ideals are minimal Togliatti systems. Vice versa, let  us write
$I=(x_{0}^{d},x_{1}^{d},x_{2}^{d},m_{1},m_{2},m_{3},m_{4})$ where for $1\leq i\leq 4,\;m_{i}=x_{0}^{a_{i}}x_{1}^{b_{i}}x_{2}^{c_{i}}$ with $a_{i}+b_{i}+c_{i}=d$.
We consider $A_{I}\subset d\Delta_{2}\cap\ZZ^{3}$ and we slice $A_{I}$ with planes in three possible manners:

For $0\leq j\leq 2$ and $0\leq i\leq d$, we define $H_{i}^{j}:=\{(t_{0},t_{1},t_{2})|t_{j}=i\}$ and $A_{I}^{(i,j)}:=A_{I}\cap H_{i}^{j}$.

We  divide the proof in two cases:

\vskip 4mm
\hspace{5mm}\underline{\textsc{Case $1$:}} There exist $1\leq i_{a},\;i_{b},\;i_{c}\leq 4$ such that $a_{i_{a}},\,b_{i_{b}},\,c_{i_{c}}\leq1$.

\hspace{5mm}\underline{\textsc{Case $2$:}} There exists one variable whose square divides all monomials $m_{i}$.

\vskip 4mm
\noindent\underline{\textsc{Case $1$:}} None of the squares of the variables divide the four monomials $m_{1},\,m_{2},\,m_{3}$ and $m_{4}$.
Up to permutation of the variables, we have two possibilities:

\vskip 2mm
\textsc{Case 1A:} $I=(x_{0}^{d},x_{1}^{d},x_{2}^{d},x_{0}^{e_{1}}x_{1}^{a}x_{2}^{d-a-e_{1}}, x_{0}^{b}x_{1}^{e_{2}}x_{2}^{d-b-e_{2}}, x_{0}^{c}x_{1}^{d-c-e_{3}}x_{2}^{e_{3}}, x_{0}^{\alpha}x_{1}^{\beta}x_{2}^{d-\alpha-\beta})$
with $0\leq e_{1},e_{2},e_{3}\leq 1$, $d-2-e_{1}\geq a\geq 2$, $d-2-e_{2}\geq b\geq 2$, $d-2-e_{3}\geq c\geq 2$ and only one of the exponents $\alpha, \beta, d-\alpha-\beta$ is $\leq 1$.

\textsc{Case 1B:} $I=(x_{0}^{d},x_{1}^{d},x_{2}^{d},x_{0}^{e_{1}}x_{1}^{e_{2}}x_{2}^{d-e_{1}-e_{2}}, x_{0}^{a}x_{1}^{d-a-e_{3}}x_{2}^{e_{3}},x_{0}^{\alpha}x_{1}^{\beta}x_{2}^{d-\alpha-\beta}, 
x_{0}^{\gamma}x_{1}^{\delta}x_{2}^{d-\gamma-\delta})$ with $0\leq e_{1},e_{2},e_{3}\leq 1$.

\vskip 2mm
In both cases, a straightforward computation using the hypothesis $d\ge 10$ shows that when we restrict to $x_0+x_1+x_2$ the $7$ monomials remain $k-$linearly independents. Therefore, $I$ is not a Togliatti system.

\vspace{4mm}
\noindent\underline{\textsc{Case $2$:}} Without loss of generality we can suppose that $x_{0}^{2}$ divides each monomial $m_{i}$. We can also assume that $a_{1}\geq a_{2}\geq a_{3}\geq a_{4}=s\geq 2$.

Let  $F_{d-1}$ be a plane curve of degree $d-1$
containing all integral points of $A_{I}$. Since $s\geq 2$, it factorizes as $F_{d-1}=L_{0}^{0}L_{1}^{0}\dotsb L_{s-1}^{0}F_{d-s-1}$. Indeed,  $F_{d-1}$ has degree $d-1$
and contains the $d$ points in $A_{I}^{(1,0)}$. So, $F_{d-1}=L_{1}^{0}F_{d-2}$. Since $F_{d-2}$ contains all $d-1$ points of $A_{I}^{(0,0)}$ it factorizes as $F_{d-1}=L_{0}^{0}L_{1}^{0}
F_{d-3}$. Repeating the process we arrive to $F_{d-1}=L_{0}^{0}L_{1}^{0}\dotsb L_{s-1}^{0}F_{d-s-1}$.
We claim that $a_{3}=a_{4}=s\geq 2$.  If $a_{3}>a_{4}=s$ then
$A_{I}^{(s,0)}$ has $d-s$ points and $F_{d-s-1}$  contains them. Hence,  $F_{d-s-1}=L_{s}^{0}F_{d-s-2}$ contradicting the minimality of $I$ (Proposition \ref{Hypersurface_Laplace}).

So far we have $a_3=a_4=s\ge 2$ and  $F_{d-1}=L_{0}^{0}\dotsb L_{s-1}^{0}F_{d-s-1}$ where $F_{d-s-1}$ is a plane curve of degree $d-s-1$ which contains all integer points of $
\tilde{A}_{I}:=A_{I}\setminus\left(\cup_{k=0}^{s-1}A_{I}^{(k,0)}\right)$. Set $\tilde{A}_{I}^{(i,j)}=\tilde{A}_{I}\cap H_{i}^{j}$. We  distinguish four subcases:

\vskip 2mm

\textsc{Case 2A:} $a_{1}=a_{2}=a_{3}=a_{4}=:s\geq 2$.

\textsc{Case 2B:} $u:=a_{1}>a_{2}=a_{3}=a_{4}=:s\geq 2$.

\textsc{Case 2C:} $u:=a_{1}=a_{2}>a_{3}=a_{4}=:s\geq 2$.

\textsc{Case 2D:} $u:=a_{1}>v:=a_{2}>a_{3}=a_{4}=:s\geq 2$.

\vskip 4mm
\noindent \textsc{Case 2A:} We assume $a_{1}=a_{2}=a_{3}=a_{4}=:s\geq 2$. In this case, $F_{d-1}=L_{0}^{0}\dotsb L_{s-1}^{0}L_{s+1}^{0}\dotsb$ $L_{d-1}^{0}$. Therefore,
$s=d-3$ and $I=(x_{0}^{d},x_{1}^{d},x_{2}^{d})+x_{0}^{d-3}(x_{1}^{3},x_{1}^{2}x_{2},$ $x_{1}x_{2}^{2},x_{2}^{3})$, which is of type $(4)$.

\vskip 4mm
\noindent\textsc{Case 2B:} We assume $u:=a_{1}>a_{2}=a_{3}=a_{4}=:s\geq 2$. In this case, $u\le s+2$. Indeed, if $u>s+2$ we have, $F_{d-s-1}=L_{s+1}^0\dotsb L_{u-1}^0F_{d-u}$ with $F_{d-u}$ a plane
curve of degree $d-u$ which contains in particular $A_{I}^{(s,0)}$. By minimality, $\#(F_{d-u}\cap A_{I}^{(s,0)})=d-s-2>d-u$ (Proposition \ref{Hypersurface_Laplace}) and we have $F_{d-u}=L_{s}^{0}F_{d-u-1}$, which
is a contradiction. Then, up to permutation of variables, $I$ is as one of the following cases:

\vskip 2mm
\textsc{Case} b1: $u=s+1$ and $I=(x_{0}^{d},x_{1}^{d},x_{2}^{d})+x_{0}^{s}(x_{0}x_{1}^{a}x_{2}^{d-a-s-1},x_{1}^{b}x_{2}^{d-b-s},x_{1}^{c}x_{2}^{d-c-s},x_{1}^{e}x_{2}^{d-e-s})$.

\textsc{Case} b2: $u=s+2$ and $I=(x_{0}^{d},x_{1}^{d},x_{2}^{d})+x_{0}^{s}(x_{0}^{2}x_{1}^{a}x_{2}^{d-a-s-2},x_{1}^{b}x_{2}^{d-b-s},x_{1}^{c}x_{2}^{d-c-s},x_{1}^{e}x_{2}^{d-e-s})$.

\vskip 2mm

\noindent\textsc{Case} b1: In this case we are removing three points from $H_{s}^{0}$ and one from $H_{s+1}^{0}$.
Up to permutation of the variables $y$ and $z$, we can assume  $d-s-1\geq a\geq \lfloor\frac{d-s-1}{2}\rfloor$ and  $d-s\geq b>c>e\geq 0$.
Let us first  suppose that $\lfloor\frac{d-s-1}{2}\rfloor>e\geq 0$.
In this case
$$\#(F_{d-s-1}\cap \tilde{A}_{I}^{(0,1)})=
\left\{\begin{array}{ll}
d-s&e\geq 1\\
d-s-1&e=0.
\end{array}\right.$$

If $e\geq 1$, then $\#\tilde{A}_{I}^{(0,1)}=d-s$, $F_{d-s-1}=L_{0}^{1}F_{d-s-2}=L_{0}^{1}\dotsb L_{e-1}^{1}F_{d-s-e-1}$ and $F_{d-s-e-1}$ contains the integer points of $\tilde{A}_{I}^{(e,1)}$. Since $a>e$ and $b>c>e$, we have $\#\tilde{A}_{I}^{(e,1)}=d-s-e$ and  $F_{d-s-e-1}=L_{e}^{1}F_{d-s-e-2}$ contradicting the minimality of $I$.
Therefore it must be $e=0$, and $m_{4}=x_{0}^{s}x_{2}^{d-s}$ with $d-s-1\geq c\geq 1$.
Let us consider
$$\#(F_{d-s-1}\cap \tilde{A}_{I}^{(1,1)})=
\left\{\begin{array}{ll}
        d-s& a,c\geq 2\\
        d-s-1& a=1,\;c\geq 2\\
        d-s-1& a\geq 2,\;c=1\\
        d-s-2& a=c=1
       \end{array}\right. $$ and we  study the four possibilities.

If $a,c\geq 2$ then we have the factorization $F_{d-s-1}=L_{1}^{1}F_{d-s-2}$. In particular, $F_{d-s-2}$ is a plane curve of degree $d-s-2$ containing the $d-s-1$ points of $\tilde{A}_{I}^{(0,1)}$. So, $F_{d-s-2}=L_{0}^{1}F_{d-s-3}$ which contradicts  the minimality of $I$. Therefore, if $a\geq 2$, then $c=1$.

If $a=1$, we have $ d-2\geq s\geq d-3$. If $s=d-2$, then $c=1$ and $I=(x_{0}^{d},x_{1}^{d},x_{2}^{d},x_{0}^{d-1}x_{1},$ $x_{0}^{d-2}x_{1}^{2},x_{0}^{d-2}x_{1}x_{2},x_{0}^{d-2}x_{2}^{2})$
which is not a Togliatti system. Otherwise, $s=d-3$, then we have several possibilities:

\noindent (i) $c\geq 2$ and $I=(x_{0}^d,x_{1}^d,x_{2}^d)+x_{0}^{d-3}(x_{1}^3,x_{2}^3,x_{0}x_{1}x_{2})+(x_{0}^{d-3}x_{1}^{2}x_{2})$ which is not minimal.

\noindent (ii) $c=1$ and  $I=(x_{0}^{d},x_{1}^{d},x_{2}^{d})+x_{0}^{d-3}(x_{0}x_{1}x_{2},x_{1}^{3},x_{2}^{3})+(x_{0}^{d-3}x_{1}x_{2}^{2})$ or $I=(x_{0}^{d},x_{1}^{d},x_{2}^{d})+x_{0}^{d-3}x_{1}x_{2}(x_0,x_1,x_2)+(x_{0}^{d-3}x_{2}^{3})$. Both of them are not minimal.

If $d-s-1\geq a\geq 2$ and $s\leq d-3$. We have  $e=0$, $c=1$ and  $(m_{1},m_{2},m_{3},m_{4})=(x_{0}^{s+1}x_{1}^{a}x_{2}^{d-a-s-1},x_{0}^{s}x_{1}^{b}x_{2}^{d-b-s},$ $x_{0}^{s}x_{1}x_{2}^{d-s-1},x_{0}^{s}x_{2}^{d-s})$ with
$d-s\geq b\geq 2$. Let us consider
$$\#(F_{d-s-1}\cap \tilde{A}_{I}^{(0,2)})=
\left\{
\begin{array}{ll}
 d-s&d-s-1\geq b,\;d-s-2\geq a\\
 d-s-1&a=d-s-1,\;d-s-1\geq b\\
 d-s-1&b=d-s,\;d-s-2\geq a\\
 d-s-2&b=d-s,\;a=d-s-1.
\end{array}\right.$$

In the first case, we have the factorization $F_{d-s-1}=L_{0}^{2}F_{d-s-2}=L_0^2L_{1}^{0}F_{d-s-3}$ and
it contradicts the minimality of $I$.

If $a=d-s-1$ and $d-s-1\geq b$, then  $b=d-s-1$. Otherwise, we would have  $F_{d-s-1}=L_{1}^{2}F_{d-s-2}$ and it would contradict the minimality of $I$.
Therefore we have $I=(x_{0}^{d},x_{1}^{d},x_{2}^{d})+x_{0}^{s}(x_{0}x_{1}^{d-s-1},x_{1}^{d-s-1}x_{2},x_{1}x_{2}^{d-s-1},x_{2}^{d-s})$ with $s\leq d-3$. For $s=d-3$ it corresponds to a Togliatti system of type (4),
while for $s\leq d-4$ is not Togliatti because when we restrict to $x_0+x_1+x_2=0$ the generators, they remain $k-$linearly independent.

If $d-s-2\geq a$ and $b=d-s$,  then  $a=d-s-2$.
Hence we have $s\leq d-4$ and $I=(x_{0}^{d},x_{1}^{d},x_{2}^{d})+x_{0}^{s}(x_{0}x_{1}^{d-s-2}x_{2},x_{1}^{d-s},x_{1}x_{2}^{d-s-1},$ $x_{2}^{d-s})$ which is never a Togliatti system.

Finally, if $b=d-s$ and $a=d-s-1$, then $s\leq d-3$ and $I=(x_{0}^{d},x_{1}^{d},x_{2}^{d})+x_{0}^{s}(x_{0}x_{1}^{d-s-1},x_{1}^{d-s},x_{1}x_{2}^{d-s-1},x_{2}^{d-s})$ which is a Togliatti system of type $(4)$ for $s=d-3$ while for $s\leq d-4$ it is
not Togliatti.

\vskip 2mm
To finish with the case b1, we have to see what happens when $d-s-2\geq e\geq\lfloor\frac{d-s-1}{2}\rfloor$. In this case $s\leq d-3$. Let us see that $a=e$. Otherwise, we can suppose $a>e$ (the other case is
symmetric) and we have the factorization $F_{d-s-1}=L_{0}^{1}\dotsb L_{e-1}^{1}F_{d-s-e-1}$. Since $a>e$ and $b>c>e$, $\tilde{A}_{I}^{(e,1)}$ has $d-s-e$ points and we have the factorization $F_{d-s-e-1}=L_{e}^{1}F_{d-s-
e-2}$ which contradicts the minimality of $I$. Hence $a=e$ and in particular $d-s-1>a$ and $d-s\geq b>c>a$.

Let us consider $\tilde{\tilde{A}}_{I}:=\tilde{A}_{I}\setminus\left(\cup_{k=0}^{a-1}\right)$ in the same spirit as $A_{I}$ and $\tilde{A}_{I}$. If $b=d-s$, then $\tilde{\tilde{A}}_{I}^{(0,2)}$ consists in $d-s-e$ different points.
Otherwise, $d-s-1\geq b$ and $\#\tilde{\tilde{A}}_{I}^{(0,2)}=d-s+1-e$. In both cases $\tilde{\tilde{A}}_{I}^{(0,2)}$ have more points than the degree of the curve $F_{d-s-e-1}$ which passes through them.
Therefore,
$F_{d-s-e-1}=L_{0}^{2}F_{d-s-e-2}$ and $d-s-1\geq b$. Since $m_{2}$ cannot be aligned vertically with any other monomial $m_{i}$, we can repeat this argument until we get that $b=c+1$ and
$F_{d-s-e-1}=L_{0}^{2}\dotsb L_{b-1}^{2}F_{d-s-e-b-1}$. Now $F_{d-s-e-b-1}$ is a plane curve of degree $d-s-e-b-1$ containing all $d-s-e-b$ points of $\tilde{\tilde{A}}_{I}^{(b,2)}$. Hence, we can factorize
$F_{d-s-e-b-1}=L_{b}^{2}F_{d-s-e-b-2}$ contradicting the minimality assumption.

\vskip 2mm
\noindent\textsc{Case} b2: We are removing from $d\Delta_{2}$ to get $\tilde{A}_{I}$: three points of $H_{s}^{0}$ and  one from $H_{s+2}^{0}$.
Up to permutation of the variables $y$ and $z$, we can suppose that $d-s-2\geq a\geq \lfloor\frac{d-s-2}{2}\rfloor$ and  $d-s\geq b>c>e\geq 0$.

Let us suppose first that $\lfloor\frac{d-s-2}{2}\rfloor>e\geq 0$. We argue as in the case $u=s+1$ to prove that $e=0$. Let us consider
$\#(F_{d-s-1}\cap \tilde{A}_{I}^{(1,1)})$. Using the same argumentation we prove that if $a,c\geq 2$ we get a contradiction. If $a=1$, then either $s=d-3$ or $s=d-4$ and we have the following cases:

\noindent(i) If $s=d-3$, then $(m_{1},m_{2},m_{3},m_{4})$ is $x_{0}^{d-3}(x_{0}^{2}x_{1},x_{1}^{3},x_{1}^{2}x_{2},x_{2}^{3})$, $x_{0}^{d-3}(x_{0}^{2}x_{1},x_{1}^{3},x_{1}x_{2}^{2},x_{2}^{3})$ or
$x_{0}^{d-3}(x_{0}^{2}x_{1},x_{1}^{2}x_{2},x_{1}x_{2}^{2},x_{2}^{3})$. All of them are Togliatti systems of type $(4)$.

\noindent(ii) If $s=d-4$, then $(m_{1},m_{2},m_{3},m_{4})$ is $x_{0}^{d-4}(x_{0}^{2}x_{1}x_{2}, x_{1}^{4},x_{1}^{3}x_{2},x_{2}^{4})$, $x_{0}^{d-4}(x_{0}^{2}x_{1}x_{2}, x_{1}^{4}, x_{1}^{2}x_{2}^{2}, x_{2}^{4})$,
$x_{0}^{d-4}(x_{0}^{2}x_{1}x_{2}, x_{1}^{4},x_{1}x_{2}^{3},x_{2}^{4})$, $x_{0}^{d-4}(x_{0}^{2}x_{1}x_{2}, x_{1}^{3}x_{2},x_{1}^{2}x_{2}^{2},x_{2}^{4})$,
$x_{0}^{d-4}(x_{0}^{2}x_{1}x_{2}, x_{1}^{3}x_{2},x_{1}x_{2}^{3}, x_{2}^{4})$ or $x_{0}^{d-4}($ $x_{0}^{2}x_{1}x_{2}, x_{1}^{2}x_{2}^{2},$ $x_{1}x_{2}^{3},x_{2}^{4})$. The only one  which is
a minimal Togliatti system is the second one and it is of type $(5)$.

Now, we assume $e=0$, $c=1$ and $d-s-2\geq a\geq 2$. In particular, $s\leq d-4$. We  consider  $\#(F_{d-s-1}\cap \tilde{A}_{I}^{(0,2)})$, and see
that if $d-s-1\geq b\geq 2$ and $d-s-3\geq a\geq 2$, there is a contradiction with the minimality of $I$. If $b=d-s$ and $a\le d-s-3$  (resp. $a=d-s-2$ and $b\le d-s-1 $) then
$a=d-s-3$ (resp. $b=d-s-1$). Otherwise we would incur again to a contradiction with the minimality of $I$. So, we have three possibilities.

\noindent(i) $a=d-s-3\geq 2$, $b=d-s$, $s\leq d-5$ and $I=(x_{0}^{d},x_{1}^{d},x_{2}^{d})+x_{0}^{s}(x_{0}^{2}x_{1}^{d-s-3}x_{2},x_{1}^{d-s},$ $x_{1}x_{2}^{d-s-1},x_{2}^{d-s})$.

\noindent(ii) $a=d-s-2$, $b=d-s-1$, $s\leq d-4$ and $I=(x_{0}^{d},x_{1}^{d},x_{2}^{d})+x_{0}^{s}(x_{0}^{2}x_{1}^{d-s-2},x_{1}^{d-s-1}x_{2},$ $x_{1}x_{2}^{d-s-1},x_{2}^{d-s})$.

\noindent(iii) $a=d-s-2$, $b\!=\!d-s$, $s\leq d-4$ and $I\!=\!(x_{0}^{d},x_{1}^{d},x_{2}^{d})+x_{0}^{s}(x_{0}^{2}x_{1}^{d-s-2},x_{1}^{d-s},x_{1}x_{2}^{d-s-1},x_{2}^{d-s})$.

After restricting to $x_0+x_1+x_2=0$, we see that none of them corresponds to a Togliatti system.

\vskip 2mm
To finish with the case b2, we see what happens when $d-s-2\geq e\geq\lfloor\frac{d-s-2}{2}\rfloor$. With the same argument that we use before, we can see that $a=e$. The difference with the case $u=s+1$ is that in this case
we can have $m_{1}$ and $m_{2}$ aligned vertically. This condition can be translated as the case when $d-b-s=d-a-s-2$. If this does not happen (i.e. $b>a+2$), then it will contradict the minimality
of $I$. Indeed: let us suppose that $0\leq k:=d-b-s<d-a-s-2$. Inductively we have the factorization $F_{d-s-e-1}=L_{0}^{2}\dotsb L_{k-1}^{2}F_{d-s-e-k-1}$. $F_{d-s-e-k-1}$ is a plane curve of degree $d-s-e-k-1$
which passes through all $d-s-e-k$ points of $\tilde{\tilde{A}}_{I}^{k}$. Hence, we have the factorization $F_{d-s-e-k-1}=L_{k}^{2}F_{d-s-e-k-2}$, contradicting the minimality assumption.

Therefore it must be $b=a+2$ and, since $b>c>a$ we have $c=a+1$. Finally we get: $I=
(x_{0}^{d},x_{1}^{d},x_{2}^{d})+x_{0}^{s}x_{1}^{a}x_{2}^{d-a-s-2}(x_{0}^{2},x_{1}^{2},x_{1}x_{2},x_{2}^{2})$ which is of type $(1)$.

\vskip 4mm

\noindent\textsc{Case 2C:} We assume that $u:=a_{1}=a_{2}>a_{3}=a_{4}=:s\geq 2$. Arguing as in case \textsc{2B} we get
$u=s+1$ and $I=(x_{0}^{d},x_{1}^{d},x_{2}^{d})+x_{0}^{s}(x_{0}x_{1}^{a}x_{2}^{d-a-s-1},x_{0}x_{1}^{b}x_{2}^{d-b-s-1},$ $x_{1}^{c}x_{2}^{d-c-s},x_{1}^{e}x_{2}^{d-e-s})$.  We can assume $d-s-1\geq a\geq \lfloor\frac{d-s-1}{2}\rfloor$, $a>b$ and $d-s\geq c>e\geq 0$.
Let us suppose first that $\lfloor\frac{d-s-1}{2}\rfloor>e\geq 0$.
  We consider $$\#(F_{d-s-1}\cap\tilde{A}_{I}^{(0,1)})=
  \left\{
  \begin{array}{lll}
   d-s&e\geq1,b\geq1&c1\\
   d-s-1&e=0,b\geq1&c2\\
   d-s-1&e\geq1,b=0&c3\\
   d-s-2&e=b=0&c4
  \end{array}\right.$$

  \noindent\textsc{Case c1:} Since $F_{d-s-1}$ is a plane curve of degree $d-s-1$ which contains all $d-s$ points of $\tilde{A}_{I}^{(0,1)}$ we have the factorization $F_{d-s-1}=L_{0}^{1}F_{d-s-2}$. Now,
  intersecting $F_{d-s-2}$ with $\tilde{A}_{I}^{(1,1)}$ and using the minimality assumption, we see that the only two possibilities are either $e\geq 2$ and $b\geq 2$ or $e=b=1$. In the first case
   $F_{d-s-2}$ factorizes as $F_{d-s-2}=L_{1}^{1}F_{d-s-3}$. Repeating the same argument we get that it must be $e=b$ in any case.
  Now, we consider $\tilde{\tilde{A}}_{I}$ as before and we take $$\#(F_{d-s-e-1}\cap\tilde{\tilde{A}}_{I}^{(0,2)})=
  \left\{
  \begin{array}{lll}
   d-s-e+1&d-s-2\geq a,d-s-1\geq c\\
   d-s-e&a=d-s-1,d-s-1\geq c\\
   d-s-e&d-s-2\geq a,c=d-s\\
   d-s-e-1&a=d-s-1,c=d-s
  \end{array}\right.$$

  In the second and third cases we obtain directly a contradiction with the minimality of $I$. In the fourth case, $\tilde{\tilde{A}}_{I}^{(1,2)}$ consists in $d-s-e$ different points and we have
  $F_{d-s-e-1}=L_{1}^{2}F_{d-s-e-2}$. Since $\tilde{\tilde{A}}_{I}^{(0,2)}$ has $d-s-e-1$ different points, we get a contradiction with the minimality of $I$.
  Finally, in the first case we obtain a factorization $F_{d-s-e-1}=L_{0}^{2}F_{d-s-e-2}$ and we repeat the same argument
  until we get that $m_{1}$ and $m_{3}$ are always vertically aligned. Then, $c=a+1$ and we have the factorization $F_{d-s-e-1}=L_{0}^{2}\dotsb L_{d-s-a-2}^{2}F_{a-e}$.
  If $a\geq e+2$ we have the factorization $F_{a-e}=L_{e+1}^{1}\dotsb L_{a-1}^{1}F_{1}$, which contradicts the minimality of $I$. Therefore, $a=e+1$ and $I
  =(x_{0}^{d},x_{1}^{d},x_{2}^{d})+x_{0}^{s}(x_{0}x_{1}^{e+1}x_{2}^{d-s-e-2},x_{0}x_{1}^{e}x_{2}^{d-s-e-1},x_{1}^{e+2}x_{2}^{d-s-e-2},x_{1}^{e}x_{2}^{d-s-e})=
  (x_{0}^{d},x_{1}^{d},x_{2}^{d})+x_{0}^{s}x_{1}^{e}x_{2}^{d-s-e-2}(x_{0}x_{1},x_{0}x_{2},x_{1}^{2},x_{2}^{2})$ which is of type (1).

  \vskip 2mm
  \noindent\textsc{Case c2:} We assume $e=0$ and $b\geq 1$. Let us consider $$\#(F_{d-s-1}\cap\tilde{A}_{I}^{(1,1)})=
  \left\{
  \begin{array}{lll}
   d-s&b\geq2, c\geq2&(i)\\
   d-s-1&b=1,c\geq2&(ii)\\
   d-s-1&b\geq2,c=1&(iii)\\
   d-s-2&b=c=1&(iv)
  \end{array}\right.$$

  In Case (i) we factorize $F_{d-s-1}=L_{1}^{1}F_{d-s-2}$. Since $F_{d-s-2}$ is a plane curve of degree $d-s-2$ containing all $d-s-1$ different points of $\tilde{A}_{I}^{(0,1)}$ it factorizes as
  $F_{d-s-2}=L_{0}^{1}F_{d-s-3}$. This contradicts the minimality of $I$.

  Case (ii): assume $e=0$, $b=1$ and $c\geq 2$. We consider $\#(F_{d-s-1}\cap\tilde{A}_{I}^{(0,2)})$ and arguing as in the previous cases we get three possibilities:

  $a=d-s-1$, $c=d-s-1$ and $I=(x_{0}^{d},x_{1}^{d},x_{2}^{d})+x_{0}^{s}(x_{0}x_{1}^{d-s-1},x_{0}x_{1}x_{2}^{d-s-2}, x_{1}^{d-s-1}x_{2}, x_{2}^{d-s})$.

  $a=d-s-2$, $c=d-s$ and $I=(x_{0}^{d},x_{1}^{d},x_{2}^{d})+x_{0}^{s}(x_{0}x_{1}^{d-s-2}x_{2},x_{0}x_{1}x_{2}^{d-s-2}, x_{1}^{d-s}, x_{2}^{d-s})$.

  $a=d-s-1$, $c=d-s$ and $I=(x_{0}^{d},x_{1}^{d},x_{2}^{d})+x_{0}^{s}(x_{0}x_{1}^{d-s-1},x_{0}x_{1}x_{2}^{d-s-2}, x_{1}^{d-s}, x_{2}^{d-s})$.

  Restricting the generators to the hyperplane $x_{0}+x_{1}+x_{2}$, we see that each of them is a Togliatti system if, and only if $s=d-3$.

  Case (iii): assume $e=0$, $c=1$ and $b\geq 2$. In particular $a\geq 3$ and $s\leq d-4$. Arguing as before, we see that the only viable possibility
  is $a=d-s-1$ and $b=d-s-2$. Therefore
  $I=(x_{0}^{d},x_{1}^{d},x_{2}^{d})+x_{0}^{s}(x_{0}x_{1}^{d-s-1},x_{0}x_{1}^{d-s-3}x_{2}^{2}, x_{1}x_{2}^{d-s-1}, x_{2}^{d-s})$, which is never a Togliatti system.

  Case (iv): assume $e=0$ and $b=c=1$. Now it only remains to determinate $a$. If $d-s-2\geq a$, we have $F_{d-s-1}=L_{0}^{2}\dotsb L_{d-s-a-2}^{2}F_{a}$.
  Since $\tilde{A}_{I}^{(d-s-a-1,2)}$
  consists in $a+1$ different points, we get a contradiction with the minimality of $I$. Therefore,
  $a=d-s-1$. Using the same argumentation we see that $a=b+1=2$. Thus $s=d-4$ and
  $I=(x_{0}^{d},x_{1}^{d},x_{2}^{d})+x_{0}^{d-4}(x_{0}x_{1}^{3}, x_{0}x_{1}x_{2}^{2}, x_{1}x_{2}^{3}, x_{2}^{4})$ which is not a Togliatti system.

  \vskip 2mm
  \noindent\textsc{Case c3:}
  Now, assume that $b=0$ and $e\geq 1$. Since $a>e$ by hypothesis, considering $\#(F_{d-s-1}\cap\tilde{A}_{I}^{(1,1)})$ we see that $e=1$ and $s\leq d-3$.
  If we consider $\#(F_{d-s-1}\cap\tilde{A}_{I}^{(0,2)})$ we get that the only viable possibilities are:

  $a=d-s-1$, $c=d-s-1$ and $I=(x_{0}^{d},x_{1}^{d},x_{2}^{d})+x_{0}^{s}(x_{0}x_{1}^{d-s-1}, x_{0}x_{2}^{d-s-1}, x_{1}^{d-s-1}x_{2}, x_{1}x_{2}^{d-s-1})$.

  $a=d-s-2$, $c=d-s$ and $I=(x_{0}^{d},x_{1}^{d},x_{2}^{d})+x_{0}^{s}(x_{0}x_{1}^{d-s-2}x_{2}, x_{0}x_{2}^{d-s-1}, x_{1}^{d-s}, x_{1}x_{2}^{d-s-1})$.

  $a=d-s-1$, $c=d-s$ and $I=(x_{0}^{d},x_{1}^{d},x_{2}^{d})+x_{0}^{s}(x_{0}x_{1}^{d-s-1}, x_{0}x_{2}^{d-s-1}, x_{1}^{d-s}, x_{1}x_{2}^{d-s-1})$.

  Each of them are Togliatti systems if, and only if $s=d-3$.

  \vskip 2mm
  \noindent\textsc{Case c4:}
  in this case we assume that $e=b=0$. If $a,c\geq 3$, we have the factorization $F_{d-s-1}=L_{1}^{1}L_{2}^{1}F_{d-s-3}$. Since $F_{d-s-3}$ is a plane curve containing all $d-s-2$ points of $\tilde{A}_{I}^
  {(0,1)}$, we get $F_{d-s-3}=L_{0}^{0}F_{d-s-4}$ which contradicts the minimality of $I$. Therefore we can consider three subcases:

  Case (i). Assume that $a=1$, then it has to be either $s=d-2$ or $s=d-3$. Hence $I$ is one of the following possibilities:
  $(x_{0}^{d}, x_{1}^{d}, x_{2}^{d})+x_{0}^{d-2}(x_{0}x_{1}, x_{0}x_{2},x_{1}^{2}, x_{2}^{2})$,
  $(x_{0}^{d}, x_{1}^{d}, x_{2}^{d})+x_{0}^{d-2}x_{2}(x_{0},x_{1}, x_{2})+(x_{0}^{d-1}x_{1})$,
  $(x_{0}^{d}, x_{1}^{d}, x_{2}^{d})+x_{0}^{d-3}(x_{0}x_{1}x_{2}, x_{0}x_{2}^{2},x_{1}^{3}, x_{2}^{3})$,
  $(x_{0}^{d}, x_{1}^{d}, x_{2}^{d})+x_{0}^{d-3}x_{2}($

  \noindent$x_{0}x_{1}, x_{0}x_{2},x_{1}^{2}, x_{2}^{2})$ and
  $(x_{0}^{d}, x_{1}^{d}, x_{2}^{d})+x_{0}^{d-3}x_{2}^{2}(x_{0},x_{1}, x_{2})+(x_{0}^{d-2}x_{1}x_{2})$. And only the first and the third possibilities give to minimal Togliatti systems of type (1) and
  type (4) respectively.

  Case (ii). Assume that $a=2$, then it can be either $s=d-3$, $s=d-4$ or $s=d-5$. Therefore, $I$ is one of the next ideals:
  $(x_{0}^{d}, x_{1}^{d}, x_{2}^{d})+x_{0}^{d-3}(x_{0}x_{1}^{2}, x_{0}x_{2}^{2},x_{1}^{3}, x_{2}^{3})$,
  $(x_{0}^{d}, x_{1}^{d}, x_{2}^{d})+x_{0}^{d-3}(x_{0}x_{1}^{2}, x_{0}x_{2}^{2},x_{1}^{2}x_{2}, x_{2}^{3})$,
  $(x_{0}^{d}, x_{1}^{d}, x_{2}^{d})+x_{0}^{d-3}(x_{0},x_{1}, x_{2})+(x_{0}^{d-2}x_{1}^{2})$,
  $(x_{0}^{d}, x_{1}^{d}, x_{2}^{d})+x_{0}^{d-4}(x_{0}x_{1}^{2}x_{2},$

  \noindent$x_{0}x_{2}^{3},x_{1}^{4}, x_{2}^{4})$,
  $(x_{0}^{d}, x_{1}^{d}, x_{2}^{d})+x_{0}^{d-4}(x_{0}x_{1}^{2}x_{2}, x_{0}x_{2}^{3},x_{1}^{3}x_{2}, x_{2}^{4})$,
  $(x_{0}^{d}, x_{1}^{d}, x_{2}^{d})+x_{0}^{d-4}(x_{0}x_{1}^{2}x_{2}, x_{0}x_{2}^{3},x_{1}^{2}x_{2}^{2},$

  \noindent$x_{2}^{4})$,
  $(x_{0}^{d}, x_{1}^{d}, x_{2}^{d})+x_{0}^{d-4}x_{2}^{3}(x_{0},x_{1}, x_{2})+(x_{0}^{d-3}x_{1}^{2}x_{2})$,
  $(x_{0}^{d}, x_{1}^{d}, x_{2}^{d})+x_{0}^{d-5}(x_{0}x_{1}^{2}x_{2}^{2}, x_{0}x_{2}^{4},x_{1}^{5}, x_{2}^{5})$,
  $(x_{0}^{d}, x_{1}^{d}, x_{2}^{d})+x_{0}^{d-5}(x_{0}x_{1}^{2}x_{2}^{2}, x_{0}x_{2}^{4},x_{1}^{4}x_{2}, x_{2}^{5})$,
  $(x_{0}^{d}, x_{1}^{d}, x_{2}^{d})+x_{0}^{d-5}(x_{0}x_{1}^{2}x_{2}^{2}, x_{0}x_{2}^{4},x_{1}^{3}x_{2}^{2}, x_{2}^{5})$,
  $(x_{0}^{d}, x_{1}^{d}, $

  \noindent$x_{2}^{d})+x_{0}^{d-5}(x_{0}x_{1}^{2}x_{2}^{2}, x_{0}x_{2}^{4},x_{1}^{2}x_{2}^{3}, x_{2}^{5})$ and
  $(x_{0}^{d}, x_{1}^{d}, x_{2}^{d})+x_{0}^{d-5}x_{2}^{4}(x_{0},x_{1}, x_{2})+(x_{0}^{d-4}x_{1}^{2}x_{2}^{2})$.

 Only the first and the second ones correspond to minimal Togliatii systems.

  Case (iii). Assume that $a\geq 3$ which implies that either $c=1$ or $c=2$ and we have $s\leq d-4$. In both cases, since
    $a\geq 3$ and $c\leq 2$, $m_{1}$ cannot be aligned vertically with any $m_{i}$. Therefore, in both cases, we get a contradiction with the minimality
  of $I$.

  \vskip 4mm
  To finish case \textsc{2C}, let us assume $e\geq\lfloor\frac{d-s-1}{2}\rfloor>0$. We will separate two cases: when $b=0$ and when $b\geq 1$.

  Case (i). We assume $b=0$, then considering $\#(F_{d-s-1}\cap\tilde{A}_{I}^{(1,1)})$ and using the bound for $e$, we obtain that $a=1$ and therefore it is either
  $s=d-2$ or $d-3$. So, $I$ is one of the following ideals: $(x_{0}^{d}, x_{1}^{d}, x_{2}^{d})+x_{0}^{d-2}x_{1}(x_{0}, x_{1}, x_{2})+(x_{0}^{d-1}x_{2})$,
  $(x_{0}^{d}, x_{1}^{d}, x_{2}^{d})+x_{0}^{d-3}(x_{0}x_{1}x_{2}, x_{0}x_{2}^{2}, x_{1}^{3}, x_{1}^{2}x_{2})$,
  $(x_{0}^{d}, x_{1}^{d}, x_{2}^{d})+x_{0}^{d-3}(x_{0}x_{1}x_{2}, x_{0}x_{2}^{2}, x_{1}^{3}, x_{1}x_{2}^{2})$ and
  $(x_{0}^{d}, x_{1}^{d}, x_{2}^{d})+x_{0}^{d-3}x_{1}$

  \noindent$x_{2}(x_{0}, x_{1}, x_{2})+(x_{0}^{d-2}x_{2}^{2})$. And any of them are minimal Togliatti systems.

  Case (ii). We assume $b\geq 1$. In this case, we can assume $e\geq b$ (the other case is symmetric) and we obtain the factorization $F_{d-s-1}=L_{0}^{1}\dotsb L_{b-1}^{1}F_{d-s-b-1}$. If $e>b$ we get a
  contradiction with the minimality of $I$. Hence, $e=b$. Now we consider as before $\tilde{\tilde{A}}_{I}$ and we have
  $$\#(F_{d-s-b-1}\cap\tilde{\tilde{A}}_{I}^{(0,2)})=
  \left\{
  \begin{array}{lll}
   d-s-b+1&d-s-2\geq a,d-s-1\geq c\\
   d-s-b&a=d-s-1,d-s-1\geq c\\
   d-s-b&d-s-2\geq a,c=d-s\\
   d-s-b-1&a=d-s-1,c=d-s
  \end{array}\right.$$
  In the second and third cases we get immediately a contradiction with the minimality. In the first case, we can repeat the same argument and we get contradiction unless $m_{1}$ and $m_{3}$ are aligned
  vertically. Hence, we always obtain that $c=a+1$, and we have the factorization $F_{d-s-b-1}=L_{0}^{2}\dotsb L_{d-s-a-2}^{2}F_{a-b}$. If $a\geq b+2$, then $\tilde{\tilde{A}}_{I}^{(d-s-a,2)}$ consists in
  $a-b+1$ different points, so we have the factorization $F_{a-b}=L_{d-s-a}^{2}F_{a-b-1}$. Now $F_{a-b-1}$ is a plane curve of degree $a-b-1$ which contains all $a-b$ points of $\tilde{\tilde{A}}_{I}^{
  (d-s-a-1,2)}$ and then it factorizes as $F_{a-b-1}=L_{d-s-a-1}^{2}F_{a-b-2}$ which contradicts the minimality of $I$. Therefore, $a=b+1$ and $I=
  (x_{0}^{d}, x_{1}^{d}, x_{2}^{d})+x_{0}^{s}(x_{0}x_{1}^{b+1}x_{2}^{d-s-b-2}, x_{0}x_{1}^{b}x_{2}^{d-s-b-1}, x_{1}^{b+2}x_{2}^{d-s-b-2}, x_{1}^{b}x_{2}^{d-s-b})=
  (x_{0}^{d}, x_{1}^{d}, x_{2}^{d})+x_{0}^{s}x_{1}^{b}x_{2}^{d-s-b-2}(x_{0}x_{1}, x_{0}x_{2}, x_{1}^{2}, x_{2}^{2})$ which is of type (1).


\vskip 4mm

\noindent\textsc{Case 2D:} We assume that $u:=a_{1}>v:=a_{2}>a_{3}=a_{4}=:s\geq 2$. Recall that we have the factorization $F_{d-1}=L_{0}^{0}L_{1}^{0}\dotsb L_{s-1}^{0}F_{d-s-1}$ and we easily check that the minimality of $I$ forces  $v=s+1$. So we can write
$I\!=\!(x_{0}^{d},x_{1}^{d},x_{2}^{d})+x_{0}^{s}(x_{0}^{r}x_{1}^{a}x_{2}^{d-a-s-r},x_{0}x_{1}^{b}x_{2}^{d-b-s-1},x_{1}^{c}x_{2}^{d-c-s},x_{1}^{e}x_{2}^{d-e-s})$ with $u=s+r$ and $d-s-1\geq r\geq 2$. We can  assume
$d-s-1\geq b\geq\lfloor\frac{d-s-1}{2}\rfloor$ and $d-s\geq c > e\geq 0$, and we have  $d-s-r\geq a\geq 0$ and $s\leq d-3$.
Let us suppose first that $\lfloor\frac{d-s-1}{2}\rfloor>e\geq0$. We consider $$\#(F_{d-s-1}\cap \tilde{A}_{I}^{(0,1)})=
\left\{
\begin{array}{lll}
 d-s&e\geq 1, a\geq 1&(d1)\\
 d-s-1&e=0, a\geq1&(d2)\\
 d-s-1&e\geq1, a=0&(d3)\\
 d-s-2&e=a=0&(d4).
\end{array}\right.$$
\noindent\textsc{Case} d1: In this case  $a=e$. Indeed, if  $a>e\geq1$ (the other case is symmetric)
we have the factorization $F_{d-s-1}=L_{0}^{1}\dotsc L_{e-1}^{1}F_{d-s-e-1}=L_{0}^{1}\dotsc L_{e-1}^{1}   L_{e}^{1}F_{d-s-e-2}$ which contradicts the minimality of $I$. Let us now consider $$\#(F_{d-s-e-1}\cap\A_{I}^{(e+1,1)})=
\left\{
\begin{array}{lll}
 d-s-e&b\geq e+2,c\geq e+2&(i)\\
 d-s-e-1& b=e+1,c\geq e+2&(ii)\\
 d-s-e-1& b\geq e+2,c=e+1&(iii)\\
 d-s-e-2&b=c=e+1&(iv).
\end{array}
\right.$$

Case (i). We have  $F_{d-s-e-1}=L_{e+1}^{1}F_{d-s-e-2}$, and since $F_{d-s-e-2}$ passes through all $d-s-e-1$ points of $\A_{I}^{(e,1)}$ we contradicts the minimality of $I$.

Case (ii). We assume $b=e+1$. Let us consider $\tilde{\A}_{I}$ as we did before and we examine $$\#(F_{d-s-e-1}\cap\tilde{\A}_{I}^{(0,2)})=
\left\{
\begin{array}{ll}
 d-s-e+1&d-s-1\geq c, 1\leq d-s-e-r\\
 d-s-e&c=d-s,1\leq d-s-e-r\\
 d-s-e&d-s-1\geq c, d-s-e-r=0\\
 d-s-e-1& c=d-s, d-s-e-r=0
\end{array}
\right.$$

In the second and third possibilities we obtain directly a contradiction with the minimality of $I$. In the last possibility we also obtain a contradiction. In fact, if $c=d-s$ and $s+r=d-e$, we do not remove
any point of $H_{1}^{2}$ and we have $\#(F_{d-s-e-1}\cap\tilde{\A}_{I}^{(1,2)})=d-s-e$. Then $F_{d-s-e-1}=L_{1}^{2}F_{d-s-e-2}=L_1^2L_{0}^{2}F_{d-s-e-3}$, which contradicts the minimality
of $I$.

Therefore if $b=e+1$, it must be $d-s-1\geq c\geq e+2$ and $1\leq d-s-e-r$. Iterating this argument we  conclude that either $c=e+2$ and $r=2$ or $c=e+3$ and $r=3$. Therefore, either
$I=(x_{0}^{d},x_{1}^{d},x_{2}^{d})+x_{0}^{s}x_{1}^{e}x_{2}^{d-s-e-2}(x_{0}^{2},x_{0}x_{1},x_{1}^{2},x_{2}^{2})$ which is of type $(2)$; or
$I=(x_{0}^{d},x_{1}^{d},x_{2}^{d})+x_{0}^{s}x_{1}^{e}x_{2}^{d-s-e-3}(x_{0}^{3},x_{0}x_{1}x_{2},x_{1}^{3},x_{2}^{3})$ which is of type $(3)$.

Case (iii). Arguing as in case (ii) we get  $b=e+2$ and $r=2$. Therefore, $I=(x_{0}^{d},x_{1}^{d},x_{2}^{d})+
x_{0}^{s}x_{1}^{e}x_{2}^{d-s-e-2}(x_{0}^{2},x_{1}x,x_{1}^{2},x_{2}^{2})$ which is of type $(2)$.

Case (iv). Arguing as in case (ii) we get that $r=2$ and $I$ is of type $(1)$.

\vskip 2mm
\noindent\textsc{Case} d2: In this case we assume $e=0$ and $a\geq 1$. We will separate the case $b=1$ from the case $b\geq2$.

If $b=1\geq\lfloor\frac{d-s-1}{2}\rfloor$ then $s=d-3$ and $r=2$. Hence, $I=(x_{0}^{d},x_{1}^{d},x_{2}^{d})+x_{0}^{d-3}(x_{0}x_{1}x_{2},x_{1}^{3},x_{2}^{3})+
(\!x_{0}^{d-3}x_{0}^{2}x_{1})$,$(\!x_{0}^{d},x_{1}^{d},x_{2}^{d})\!+x_{0}^{d-3}(\!x_{0}^{2}x_{1},x_{0}x_{1}x_{2},x_{1}^{2}x_{2},x_{2}^{3})$ or $(\!x_{0}^{d},x_{1}^{d},x_{2}^{d})\!+x_{0}^{d-3}
(\!x_{0}^{2}x_{1},x_{0}x_{1}x_{2},x_{1}x_{2}^{2},x_{2}^{3})$. 
The first one is not minimal and the remaining two are of type $(4)$.

Assume $b\geq 2$. Let us first suppose $d-s-r-1\geq0$ (i.e. $m_{1}\notin H_{0}^{2}$) and we consider
$$\#(F_{d-s-1}\cap\A_{I}^{(1,1)})=\left\{
\begin{array}{lll}
 d-s&a\geq 2,c\geq 2&(i)\\
 d-s-1&a=1,c\geq2&(ii)\\
 d-s-1&a\geq2,c=1&(iii)\\
 d-s-2&a=c=1&(iv).
\end{array}\right.$$

Case (i). We get $F_{d-s-1}=L_{1}^{1}F_{d-s-2}=L_{0}^{1}L_{1}^{1}F_{d-s-3}$ which contradicts the minimality of $I$.

Case (ii). Assume that $a=1$ and $c\geq 2$. Suppose that $d-s-r-1>0$ and let us consider
$$\#(F_{d-s-1}\cap\A_{I}^{(0,2)})=
\left\{
\begin{array}{ll}
 d-s&d-s-2\geq b, d-s-1\geq c\\
 d-s-1&b=d-s-1,d-s-1\geq c\\
 d-s-1&d-s-2\geq b, c=d-s\\
 d-s-2&b=d-s-1, c=d-s.
\end{array}\right.$$

The first possibility contradicts  the minimality of $I$.

Now let us suppose that $d-s-r-1>1$. In this case, the second (resp.  third) possibility can occur if, and only if $b=c=d-s-1$ (resp. $b=d-s-2$ and $c=d-s$). Therefore $I$ is one of the next types:

$I=(x_{0}^{d},x_{1}^{d},x_{2}^{d})+x_{0}^{s}(x_{0}^{r}x_{1}x_{2}^{d-s-r-1},x_{0}x_{1}^{d-s-1},x_{1}^{d-s-1}x_{2},x_{2}^{d-s})$ which does not correspond to a Togliatti system.

$I=(x_{0}^{d},x_{1}^{d},x_{2}^{d})+x_{0}^{s}(x_{0}^{r}x_{1}x_{2}^{d-s-r-1},x_{0}x_{1}^{d-s-2}x_{2},x_{1}^{d-s},x_{2}^{d-s})$ which is a Togliatti system if, and only if $r=2$ and $s=d-5$ (of type $(6)$).

If $d-s-r-1=1$, then there are no special restrictions for the second and third case. Therefore $I$ is one of the next types:

$I=(x_{0}^{d},x_{1}^{d},x_{2}^{d})+x_{0}^{s}(x_{0}^{d-s-2}x_{1}x_{2},x_{0}x_{1}^{d-s-1},x_{1}^{c}x_{2}^{d-s-c},x_{2}^{d-s})$ which is a Togliatti system if, and only if $s=d-4$ and $c=3$ (of type $(5)$), or

$I=(x_{0}^{d},x_{1}^{d},x_{2}^{d})+x_{0}^{s}(x_{0}^{d-s-2}x_{1}x_{2},x_{0}x_{1}^{b}x_{2}^{d-s-b-1},x_{1}^{d-s},x_{2}^{d-s})$ which is a Togliatti system if, and only if $s=d-5$ and $b=2$ (of type $(6)$), or

$I=(x_{0}^{d},x_{1}^{d},x_{2}^{d})+x_{0}^{s}(x_{0}^{r}x_{1}x_{2}^{d-s-r-1},x_{0}x_{1}^{d-s-1},x_{1}^{d-s},x_{2}^{d-s})$ which is a Togliatti system if, and only if $r=2$ and $s=d-3$ (of type $(3)$).

Now, let us suppose that $d-s-r-1=0$. Arguing as usual, we see that it cannot be $d-s-2\geq c$ and $d-s-3\geq b$. Therefore $d-s\geq c\geq d-s-1$ or $d-s-1\geq b\geq d-s-2$, and we have the following possibilities:

 \noindent$b=d-s-2$, $d-s-2\geq c$ and $I=(x_{0}^{d},x_{1}^{d},x_{2}^{d})+x_{0}^{s}(x_{0}^{d-s-1}x_{1},x_{0}x_{1}^{d-s-2}x_{2},x_{1}^{c}x_{2}^{d-s-c},x_{2}^{d-s})$ which is a Togliatti system if, and only if $s=d-3$ and $c=d-s-2$ (of type $(4)$).

 \noindent$d-s-3\geq b$, $c=d-s-1$ and $I=(x_{0}^{d},x_{1}^{d},x_{2}^{d})+x_{0}^{s}(x_{0}^{d-s-1}x_{1},x_{0}x_{1}^{b}x_{2}^{d-s-b-1},x_{1}^{d-s-1}x_{2},x_{2}^{d-s})$ which is a Togliatti system if, and only if $d-3\geq s\geq d-4$ and $b=d-s-3$ (resp.
 of type $(4)$ and $(5)$).

 \noindent$b=d-s-2$, $c=d-s-1$ and $I=(x_{0}^{d},x_{1}^{d},x_{2}^{d})+x_{0}^{s}(x_{0}^{d-s-1}x_{1},x_{0}x_{1}^{d-s-2}x_{2},x_{1}^{d-s-1}x_{2},x_{2}^{d-s})$ which is a Togliatti system if, and only if $s=d-3$ (of type $(4)$).

 \noindent$b=d-s-1$, $d-s-1\geq c$ and $I=(x_{0}^{d},x_{1}^{d},x_{2}^{d})+x_{0}^{s}(x_{0}^{d-s-1}x_{1},x_{0}x_{1}^{d-s-1},x_{1}^{c}x_{2}^{d-s-c},x_{2}^{d-s})$ which is a Togliatti system if, and only if $s=d-3$ and $d-s-1\geq c\geq d-s-2$ (of type $(4)$).

 \noindent$d-s-2\geq b$, $c=d-s$ and $I=(x_{0}^{d},x_{1}^{d},x_{2}^{d})+x_{0}^{s}(x_{0}^{d-s-1}x_{1},x_{0}x_{1}^{b}x_{2}^{d-s-b-1},x_{1}^{d-s},x_{2}^{d-s})$ which is a Togliatti system if, and only if $s=d-3$ and $d-s-2\geq b\geq d-s-3$ (of type $(4)$).

 \noindent$b=d-s-1$, $c=d-s$ and $I=(x_{0}^{d},x_{1}^{d},x_{2}^{d})+x_{0}^{s}(x_{0}^{d-s-1}x_{1},x_{0}x_{1}^{d-s-1},x_{1}^{d-s},x_{2}^{d-s})$ which is a Togliatti system if, and only if $s=d-3$ (of type $(4)$).

Case (iii). Assume $c=1$ and $a\geq2$. We consider  $\#(F_{d-s-1}\cap\A_{I}^{(0,2)})$ and  we obtain that $I$ is one of the next types:

$I=(x_{0}^{d},x_{1}^{d},x_{2}^{d})+x_{0}^{s}(x_{0}^{r}x_{1}^{d-s-r-1}x_{2},x_{0}x_{1}^{d-s-1},x_{1}x_{2}^{d-s-1},x_{2}^{d-s})$ which is a Togliatti system if, and only if $r=2$ and $d-3\geq s\geq d-4$ (of type $(4)$ and $(5)$).

$I=(x_{0}^{d},x_{1}^{d},x_{2}^{d})+x_{0}^{s}(x_{0}^{r}x_{1}^{d-s-r},x_{0}x_{1}^{d-s-2}x_{2},x_{1}x_{2}^{d-s-1},x_{2}^{d-s})$ which is a Togliatti system if, and only if $r=2$ and $s=d-3$ (of type $(4)$).

$I=(x_{0}^{d},x_{1}^{d},x_{2}^{d})+x_{0}^{s}(x_{0}^{r}x_{1}^{d-s-r},x_{0}x_{1}^{d-s-1},x_{1}x_{2}^{d-s-1},x_{2}^{d-s})$. In this case, let us consider $\#(F_{d-s-1}\cap\A_{I}^{(1,2)})=d-s$ and inductively we obtain
$F_{d-s-1}=L_{1}^{2}\dotsb L_{d-s-2}^{1}F_{1}$. Therefore it must be $r=2$ and $s=d-3$, and $I$ is of type $(4)$.

Case (iv). Assume $a=c=1$. Let us first suppose that $d-s-r-1>0$.
If $d-s-2\geq b$ we  factorize $F_{d-s-1}$ as $F_{d-s-1}=L_{0}^{2}F_{d-s-2}$ which  contradicts the minimality of $I$. Therefore, $b=d-s-1$ and
we  factorize $F_{d-s-1}=L_{1}^{2}\dotsb L_{d-s-r-2}F_{r+1}$. Since $\#(F_{r}\cap\A_{I}^{(0,2)})=d-s-1$ we have $r+1\geq d-s-1$ and then $d-s-r-1\leq 1$. Therefore, $d-s-r-1=1$ and $I=
(x_{0}^{d},x_{1}^{d},x_{2}^{d})+x_{0}^{s}(x_{0}^{d-s-2}x_{1}x_{2},x_{0}x_{1}^{d-s-1},x_{1}x_{2}^{d-s-1},x_{2}^{d-s})$. It is a Togliatti system if, and only if $s= d-4$ (of type $(5)$).

If $d-s-r-1=0$,  we use the same argumentation to prove that $d-s-1\geq b\geq d-s-2$ and then we have two possibilities:

\noindent$b=d-s-1$ and $I=(x_{0}^{d},x_{1}^{d},x_{2}^{d})+x_{0}^{s}(x_{0}^{d-s-1}x_{1},x_{0}x_{1}^{d-s-1},x_{1}x_{2}^{d-s-1},x_{2}^{d-s})$

\noindent$b=d-s-2$ and $I=(x_{0}^{d},x_{1}^{d},x_{2}^{d})+x_{0}^{s}(x_{0}^{d-s-1}x_{1},x_{0}x_{1}^{d-s-2}x_{2},x_{1}x_{2}^{d-s-1},x_{2}^{d-s})$

They are Togliatti systems if, and only if $s=d-3$ (of type $(4)$).

\vskip 2mm
\noindent \textsc{Case} d3: Let us assume $e\geq 1$ and $a=0$. Actually, it must be $e=1$. Otherwise, $e>1$ and $\#(F_{d-s-1}\cap\A_{I}^{(1,1)})=d-s$, and we have seen that this cannot happen.

Now, let us suppose $d-s-r>1$. Arguing as before we  see that there are three possibilities:

\noindent$b=d-s-1$, $c=d-s-1$ and $I=(x_{0}^{d},x_{1}^{d},x_{2}^{d})+x_{0}^{s}(x_{0}^{r}x_{2}^{d-s-r}, x_{0}x_{1}^{d-s-1},$ $x_{1}^{d-s-1}x_{2},x_{1}x_{2}^{d-s-1})$.

\noindent$b=d-s-2$, $c=d-s$ and $I=(x_{0}^{d},x_{1}^{d},x_{2}^{d})+x_{0}^{s}(x_{0}^{r}x_{2}^{d-s-r}, x_{0}x_{1}^{d-s-2}x_{2},$ $x_{1}^{d-s},x_{1}x_{2}^{d-s-1})$.

They do not correspond to a Togliatti system.

\noindent$b=d-s-1$, $c=d-s$. If $d-2>s+r$, then we have the factorization $F_{d-s-1}=L_{1}^{2}\dotsb L_{d-s-r-1}F_{r}$ and $\#(F_{r}\cap \A_{I}^{(0,2)})=d-s-2>r$, which contradicts the minimality of $I$.
Hence we have $s+r=d-2$ and $I=(x_{0}^{d},x_{1}^{d},x_{2}^{d})+x_{0}^{s}(x_{0}^{d-s-2}x_{2}^{2}, x_{0}x_{1}^{d-s-1}, x_{1}^{d-s}, x_{1}x_{2}^{d-s-1})$ which is never 
a Togliatti system since $s\leq d-4$.

To finish, assume $d-s-r=1$. Arguing in the same manner, we see that it cannot occur $d-s-3\geq b$ and $d-s-2\geq c\geq 1$. Therefore we have the following possibilities:

\noindent$d-s-3\geq b$, $c=d-s-1$ and $I\!=\!(x_{0}^{d},x_{1}^{d},x_{2}^{d})+x_{0}^{s}(x_{0}^{d-s-1}x_{2}, x_{0}x_{1}^{b}x_{2}^{d-s-b-1},x_{1}^{d-s-1}x_{2},x_{1}x_{2}^{d-s-1})$, it is a Togliatti system if, and only if $s=d-3$ and $b=d-s-3$ (of type $(4)$).

\noindent$b=d-s-2$, $c=d-s-1$ and $I=(x_{0}^{d},x_{1}^{d},x_{2}^{d})+x_{0}^{s}(x_{0}^{d-s-1}x_{2}, x_{0}x_{1}^{d-s-2}x_{2},x_{1}^{d-s-1}x_{2},x_{1}x_{2}^{d-s-1})$, it is a Togliatti system 
if, and only if $s=d-3$ but it is not minimal.

\noindent$b=d-s-1$, $d-s-1\geq c$ and $I=(x_{0}^{d},x_{1}^{d},x_{2}^{d})+x_{0}^{s}(x_{0}^{d-s-1}x_{2}, x_{0}x_{1}^{d-s-1},x_{1}^{c}x_{2}^{d-s-c},x_{1}x_{2}^{d-s-1})$, it is a Togliatti system if, and only if $s=d-3$ and $c=d-s-1$ (of type $(4)$).

\noindent$d-s-2\geq b$, $c=d-s$ and $I=(x_{0}^{d},x_{1}^{d},x_{2}^{d})+x_{0}^{s}(x_{0}^{d-s-1}x_{2}, x_{0}x_{1}^{b}x_{2}^{d-s-b-1},x_{1}^{d-s},x_{1}x_{2}^{d-s-1})$. It is a Togliatti system if, and only if $s=d-3$ and $d-s-2\geq b\geq d-s-3$ (of type $(4)$), or
$s=d-4$ and $b=d-s-2$ (of type $(5)$).

\noindent$b=d-s-1$, $c=d-s$ and $I=(x_{0}^{d},x_{1}^{d},x_{2}^{d})+x_{0}^{s}(x_{0}^{d-s-1}x_{2}, x_{0}x_{1}^{d-s-2}x_{2},x_{1}^{d-s},x_{1}x_{2}^{d-s-1})$, it is a Togliatti system if, and only if $s=d-3$ (of type $(4)$).

\vskip 2mm
\noindent\textsc{Case} d4: Let us assume $e=a=0$. If $b\geq 3$ and $c\geq 3$,  we have the factorization $F_{d-s-1}=L_{1}^{1}L_{2}^{1}F_{d-s-3}$ and $\#(F_{d-s-3}\cap\A_{I}^{(0,1)})=d-s-2$ and we
contradict the minimality of $I$. Hence we distinguish three cases: $b=1$, $b=2$ and $b\geq 3$.

Case (i). We assume $b=1$. Since $b\geq\lfloor\frac{d-s-1}{2}\rfloor$ it must be $s=d-3$. Therefore $I=x_{0}^{d},x_{1}^{d},x_{2}^{d})+x_{0}^{d-3}(x_{0}x_{1}x_{2},x_{1}^{3},x_{2}^{3})+(x_{0}^{d-1}x_{2})$,
$(x_{0}^{d},x_{1}^{d},x_{2}^{d})+x_{0}^{d-3}(x_{0}^{2}x_{2},x_{0}x_{1}x_{2},x_{1}^{2}x_{2},$ $x_{2}^{3})$ or $(x_{0}^{d},x_{1}^{d},x_{2}^{d})+x_{0}^{d-3}(x_{0}^{2}x_{2},x_{0}x_{1}x_{2},x_{1}x_{2}^{2},x_{2}^{3})$. The first one is not minimal while the remaining two are of type $(1)$.

Case (ii). We assume $b=2$. Since $b\geq\lfloor\frac{d-s-1}{2}\rfloor$ it must be $d-3\geq s\geq d-5$.

If $s=d-3$, $I=(x_{0}^{d},x_{1}^{d},x_{2}^{d})+x_{0}^{d-3}(x_{0}^{2}x_{2},x_{0}x_{1}^{2},x_{1}^{3},x_{2}^{3})$, $(x_{0}^{d},x_{1}^{d},x_{2}^{d})+x_{0}^{d-3}(x_{0}^{2}x_{2},x_{0}x_{1}^{2},x_{1}^{2}x_{2},x_{2}^{3})$ or
$(x_{0}^{d},x_{1}^{d},x_{2}^{d})+x_{0}^{d-3}(x_{0}^{2}x_{2},x_{0}x_{1}^{2},x_{1}x_{2}^{2},x_{2}^{3})$. All of them are of type $(4)$.

If $s=d-4$, $I=(x_{0}^{d},x_{1}^{d},x_{2}^{d})+x_{0}^{d-4}(x_{0}^{3}x_{2},x_{0}x_{1}^{2}x_{2},x_{1}^{4},x_{2}^{4})$,
$(x_{0}^{d},x_{1}^{d},x_{2}^{d})+x_{0}^{d-4}(x_{0}^{3}x_{2},x_{0}x_{1}^{2}x_{2},$

\noindent$x_{1}^{3}x_{2},x_{2}^{4})$,
$(x_{0}^{d},x_{1}^{d},x_{2}^{d})+x_{0}^{d-4}(x_{0}^{3}x_{2},x_{0}x_{1}^{2}x_{2},x_{1}^{2}x_{2}^{2},x_{2}^{4})$,
$(x_{0}^{d},x_{1}^{d},x_{2}^{d})+x_{0}^{d-4}(x_{0}^{3}x_{2},x_{0}x_{1}^{2}x_{2},$ $x_{1}x_{2}^{3},x_{2}^{4})$,
$(x_{0}^{d},x_{1}^{d},x_{2}^{d})+x_{0}^{d-4}(x_{0}^{2}x_{2}^{2},x_{0}x_{1}^{2}x_{2},x_{1}^{4},x_{2}^{4})$,
$(x_{0}^{d},x_{1}^{d},x_{2}^{d})+x_{0}^{d-4}(x_{0}^{2}x_{2}^{2},x_{0}x_{1}^{2}x_{2},x_{1}^{3}x_{2},x_{2}^{4})$,
$(x_{0}^{d},x_{1}^{d},$ $x_{2}^{d})+x_{0}^{d-4}(x_{0}^{2}x_{2}^{2},x_{0}x_{1}^{2}x_{2},x_{1}^{2}x_{2}^{2},x_{2}^{4})$,
$(x_{0}^{d},x_{1}^{d},x_{2}^{d})+x_{0}^{d-4}(x_{0}^{2}x_{2}^{2},x_{0}x_{1}^{2}x_{2},x_{1}x_{2}^{3},x_{2}^{4})$. Only the fifth one is a minimal Togliatti system, and it is of type $(5)$.

Finally, if $s=d-5$, $I$ has $15$ possibilities, but any of them is a minimal Togliatti system.

Case (iii). We assume $b\geq 3$. Then, either $c=1$ or $c=2$.

Case $c=1$. We will see that  $b=d-s-1$. Suppose  $d-s-2\geq b\geq 3$, then $\#(F_{d-s-1}\cap\A_{I}^{(0,2)})=d-s$ and $F_{d-s-1}=L_{0}^{2}F_{d-s-2}$. First we will see that
this implies that $m_{1}$ and $m_{2}$ are aligned vertically (i.e. $d-s-b-1=d-s-r$). We  suppose that $d-s-b-1\leq d-s-r$, and then $b\geq r-1$ (the other case is symmetric). Inductively we
obtain $F_{d-s-1}=L_{0}^{2}L_{1}^{2}\dotsb L_{d-s-b-2}^{2}F_{b}$. If $b>r-1$, then $\#\A_{I}^{(d-s-b-1,2)}=b+1$ and it would mean to a contradiction with the minimality of $I$. Hence,
$b=r-1$ and we get the factorization $F_{d-s-1}=L_{0}^{2}L_{1}^{2}\dotsb L_{d-s-b-2}^{2}L_{d-s-b}^{2}\dotsb L_{d-s-2}F_{1}$. Since $\#\A_{I}^{(d-s-b-1,2)}=b\geq 3$ we have again a contradiction with the minimality.

Once we have seen that $b=d-s-1$, using the usual argumentation we see that $d-s-r=1$. Therefore $I=(x_{0}^{d},x_{1}^{d},x_{2}^{d})+x_{0}^{s}(x_{0}^{d-s-1}x_{2},x_{0}x_{1}^{d-s-1},x_{1}x_{2}^{d-s-1},x_{2}^{d-s})$ with $s\leq d-3$,
which is Togliatti if, and only if $s=d-3$ and it is of type $(4)$.

Case $c=2$. Since $\#\A_{I}^{(1,1)}=d-s$ we have  $F_{d-s-1}=L_{1}^{1}F_{d-s-2}$. If $d-s-2\geq b\geq 3$, then $\#\tilde{\A}_{I}^{(0,2)}=d-s-1$ and $F_{d-s-1}$ would factorize as
$F_{d-s-1}=L_{1}^{1}L_{0}^{2}F_{d-s-3}$. This contradicts the minimality of $I$ because $\#\tilde{\A}_{I}^{(0,1)}=d-s-2$ which would force the factorization $F_{d-s-1}=L_{1}^{1}L_{0}^{2}L_{0}^{1}F_{d-s-4}$.
Therefore $b=d-s-1$ and again by minimality  we see that $d-s-r=1$. Hence, $I=(x_{0}^{d},x_{1}^{d},x_{2}^{d})+x_{0}^{s}(x_{0}^{d-s-1}x_{2}, x_{0}x_{1}^{d-s-1},x_{1}^{2}x_{2}^{d-s-2},x_{2}^{d-s})$, which is Togliatti if, and only if $s=d-3$ and in this case
it is of type $(4)$

\vskip 2mm
To finish case \textsc{2D} we see what happens when $d-s\geq c>e\geq\lfloor\frac{d-s-1}{2}\rfloor$. We  see using the minimality that either $a\geq b=e$, $b\geq a=e$ or $e\geq a=b$.

Arguing as before we  see that in the first possibility $m_{1}$ and $m_{3}$ must be vertically aligned and in particular $c=e+2$, $a=e$ and $r=2$. Therefore $I=(x_{0}^{d},x_{1}^{d},x_{2}^{d})+x_{0}^{s}x_{1}^{e}x_{2}^{d-s-e-2}(x_{0}^{2},x_{0}x_{1},x_{1}^{2},x_{2}^{2})$ which is of type $(1)$.

Now we assume $b\geq a=e$. If $b,c\geq e+1$, then we have the factorization $F_{d-s-e-1}=L_{e+1}^{1}F_{d-s-e-2}$ and, since $\#\tilde{\A}_{I}^{(e,1)}=d-s-e-1$ we get $F_{d-s-e-1}=
L_{e}^{1}L_{e+1}^{1}F_{d-s-e-3}$ which contradicts the minimality. Now, suppose $b=e+1$ and $c\geq e+2$ (resp. $b\geq e+2$ and $c=e+1$).
As we have seen earlier, $m_{1}$ and $m_{3}$ (resp. $m_{2}$) must be aligned. Therefore, we can see using the minimality assumption that $r=2$ and $c=e+2$ (resp. $r=2$ and $b=e+2$). In both cases $I$ is of type $(1)$.

Finally, let us assume $e\geq a=b$. If $e\geq a+2$, we  get a contradiction with the minimality of $I$. Hence either $e=a$ or $e=a+1$. If $e=a$ we see that $c=a+2$ and $r=2$. Therefore
$I$ is of type $(1)$. Otherwise $e=a+1$ and we get $c=a+2$ and $r=2$ and $I$ is of type $(1)$.
\end{proof}

For any integer $d\ge 3$, set $M^0(d)=\{ x_0^ax_1^bx_2^c \mid a+b+c=d \text{ and } a,b,c\ge 1\}$.

\begin{thm}\label{mainthm2}
Let $I\subset k[x_{0},\dotsc,x_{n}]$ be a smooth minimal  monomial
Togliatti system of forms of degree $d\ge 10$.
 Assume that $\mu(I)=2n+3$. Then, $n=2$ and, up to
a permutation of the coordinates, one of the following cases holds:
 \begin{itemize}
 \item[(i)] $I=(x_0^d,x_1^d, x_2^d)+m(x_0^2,x_1^2,x_0x_2,x_1x_2)$ with $m\in M^0(d-2)$, or
 \item[(ii)] $I=(x_0^d,x_1^d,x_2^d)+m(x_0^2,x_1^2,x_0x_1,x_2^2)$ with $m\in M^0(d-2)$, or
 \item[(iii)]  $I=(x_0^d,x_1^d, x_2^d)+m(x_0^3,x_1^3,x_2^3,x_0x_1x_2)$ with $m\in M^0(d-3)$.
 \end{itemize}
\end{thm}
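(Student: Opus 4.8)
The plan is to reduce to $n=2$ and then read off the smooth members of the six families produced by Theorem~\ref{mainthm1}. Throughout, recall that by Proposition~\ref{lem-L-element} one may test the WLP with $x_0+\dots+x_n$, and that Proposition~\ref{Hypersurface_Laplace} translates ``minimal Togliatti'' into a statement about hypersurfaces of degree $d-1$ through $A_I$.

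First I would dispose of $n\ge 3$, i.e.\ show that no smooth minimal monomial Togliatti system of forms of degree $d\ge 10$ can have exactly $2n+3$ generators. The starting observation is that a smooth minimal monomial Togliatti system necessarily contains all the pure powers $x_0^d,\dots,x_n^d$: if some $x_i^d\notin I$, then $de_i\in A_I$, and a local computation at the torus-fixed point of $X_{A_I}$ over $de_i$ together with the Laplace condition forces $I$ to be the system $(x_1^d,\dots,x_n^d)+x_0^{d-1}(x_0,\dots,x_n)$ of the $\mu(I)=2n+1$ classification, which again contains every pure power. Thus $A_I$ is the full grid $d\Delta_n\cap\ZZ^{n+1}$ with the $n+1$ vertices removed together with the integral points $P_1,\dots,P_{n+2}$ of the remaining $n+2$ generators, and by Proposition~\ref{Hypersurface_Laplace} there is a hypersurface $F$ of degree $d-1$ through all of $A_I$ avoiding $P_1,\dots,P_{n+2}$.

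The engine for the $n\ge 3$ case is an interpolation lemma: no nonzero form of degree at most $d-1$ in $n$ variables vanishes at every lattice point of $d\Delta_{n-1}$, and for $d\ge 10$ even the lattice points of $d\Delta_{n-1}$ other than its $n$ vertices impose independent conditions on forms of degree $d-1$. Restricting $F$ to each coordinate hyperplane $\{x_i=0\}\cong\PP^{n-1}$, the section $F|_{\{x_i=0\}}$ vanishes on every lattice point of the slice $d\Delta_{n-1}$ except the vertices lying there and the $P_k$ lying on $\{x_i=0\}$; the lemma then forces, for every $i$, that either $x_i\mid F$ or some $P_k\in\{x_i=0\}$. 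Factoring $F=\bigl(\prod_{i\in S}x_i\bigr)F'$ with $S=\{i: x_i\mid F\}$ and iterating the slicing on $F'$ pins down the shape of $F$ (its further irreducible factors being ``slice'' hyperplanes $\{x_j=c\}$, exactly as in the proof of Theorem~\ref{mainthm1}) and of $\{P_1,\dots,P_{n+2}\}$ (each $P_k$ having positive exponent in every variable of $S$). Feeding the resulting configurations into the minimality condition of Proposition~\ref{Hypersurface_Laplace} and the smoothness criterion of Proposition~\ref{smoothness}, and counting how many integral points of $d\Delta_n$ get removed, one checks that $n\ge 3$ together with exactly $2n+3$ generators is incompatible with smoothness. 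I expect this bookkeeping — in particular the independent-conditions statement for $d\Delta_{n-1}$ minus its vertices, and the enumeration of the admissible factorizations of $F$ — to be the main obstacle.

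For $n=2$ we have $2n+3=7$, so Theorem~\ref{mainthm1} applies and $I$ is one of the families (1)--(6); it then remains to decide which are smooth, via Proposition~\ref{smoothness}. Here $P_I$ is $d\Delta_2$ with its three corners truncated and a few interior lattice points deleted from $A_I$, and (since $d\ge 10$) the three truncations may be analysed separately. A truncated corner is a unimodular vertex of $P_I$ exactly when no generator other than the three pure powers lies on a boundary edge of $d\Delta_2$ adjacent to that corner. For families (1),(2),(3) this happens precisely when the monomial $m$ has all three exponents positive, i.e.\ $m\in M^0(\,\cdot\,)$: then the four extra generators are interior to $d\Delta_2$, each corner is cut by a single unimodular edge, and conditions (i),(ii) of Proposition~\ref{smoothness} are readily verified for every face, so $X_{A_I}$ is smooth. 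In the remaining cases — families (4),(5),(6), and families (1),(2),(3) with some exponent of $m$ equal to $0$ — at least two non-pure-power generators sit on boundary edges near the $x_0$-corner (resp.\ near the corner where $m$ ``sticks out''), and one exhibits a truncated corner $v$ of $P_I$ whose two primitive edge directions, e.g.\ $(-1,1,0)$ and $(-1,-1,2)$, span an index-$2$ sublattice of $\{w\in\ZZ^3:\ w_0+w_1+w_2=0\}$; then condition (i) (equivalently (ii)) of Proposition~\ref{smoothness} fails and $X_{A_I}$ is singular. Hence the smooth systems are exactly those listed in (i),(ii),(iii), which completes the proof.
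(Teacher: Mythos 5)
Your reduction for $n=2$ coincides with the paper's: once Theorem~\ref{mainthm1} is available, one runs the six families through the smoothness criterion of Proposition~\ref{smoothness} and keeps exactly (1),(2),(3) with $m\in M^0$. The divergence, and the problem, is in how you handle $n\ge 3$. The paper disposes of this case in one line by citing an already-established result (Proposition~4.1 of \cite{MeMR}), which says precisely that for $n\ge 3$ and $d\ge 4$ there is no smooth minimal monomial Togliatti system with $\mu(I)=2n+3$. You instead attempt to re-derive this from scratch, and your derivation is not a proof but a plan: the interpolation lemma (that the lattice points of $d\Delta_{n-1}$ minus its vertices impose independent conditions on forms of degree $d-1$), the enumeration of admissible factorizations of $F$, and the final count showing incompatibility with $2n+3$ generators are all deferred, and you yourself flag them as ``the main obstacle.'' Since this is the only part of the theorem not already contained in Theorem~\ref{mainthm1}, leaving it as a sketch is a genuine gap. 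Your opening claim that a smooth minimal monomial Togliatti system must contain all pure powers $x_0^d,\dots,x_n^d$ is likewise asserted with only a gesture (``a local computation at the torus-fixed point\dots forces $I$ to be the $2n+1$ system''); this too is a statement that in the literature requires an argument.

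Two smaller points on the $n=2$ part. Your criterion ``a truncated corner is a unimodular vertex of $P_I$ exactly when no generator other than the three pure powers lies on a boundary edge adjacent to that corner'' is not correct as a general principle: in the smooth system $(x_0^d,x_1^d,x_2^d)+x_0^{d-1}(x_0,x_1,x_2)$ from the $\mu(I)=2n+1$ classification, the removed points $(d-1,1,0)$ and $(d-1,0,1)$ lie on boundary edges and yet the truncated corner is smooth. Within the six families of Theorem~\ref{mainthm1} your dichotomy happens to land on the right answer, but the justification should be a case-by-case application of conditions (i) and (ii) of Proposition~\ref{smoothness} (as the paper does), not this heuristic. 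Your index-two computation for the pair of edge directions $(-1,1,0)$, $(-1,-1,2)$ is correct and is the right kind of certificate of non-smoothness, but you would need to exhibit such a vertex explicitly in each excluded family rather than asserting that ``one exhibits'' it.
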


\begin{proof} By \cite[Proposition 4.1]{MeMR}, for $n\ge 3$ and $d\ge 4$ there are no {\em smooth}  minimal  monomial
Togliatti systems $I\subset k[x_{0},\dotsc,x_{n}]$ of forms of degree $d$ with  $\mu(I)=2n+3$. So, $n=2$.
For $n=2$, the result follows from  Theorem \ref{mainthm1} together with  the smoothness criterion Proposition \ref{smoothness}.
\end{proof}

The following remarks shows that in the above Theorem the hypothesis of being smooth cannot be deleted.

\begin{rem} \label{rem1} \rm If $n=3$ and $d\ge 10$ one can easily check  that
$I=(x_0^d, x_1^d, x_2^d, x_3^d)+x_0^{d-2}(x_0x_1,x_2x_3,
x_1^2,x_2^2,x_3^2)$ is a minimal monomial  Togliatti systems of forms of degree $d$ with $\mu(I)=2n+3=9$ and it is non-smooth.
\end{rem}

\begin{rem} \label{rem2} \rm
For $n=2$ and $6\le d\le 9$ one can check with the help of
Macaulay2 \cite{M} that there exist other examples of minimal monomial Togliatti systems $I=(x_{0}^d,x_{1}^d,x_{2}^d)+J\subset k[x_0,x_1,x_2]$ with $\mu(I)=7$. For seek of completeness we give the full list of possible $J$'s not included in Theorem \ref{mainthm1}:

 \vskip 2mm
 \noindent\underline{$d=6:$\;} {\small
 $(x_{0}^{5}x_{1}, x_{0}^{3}x_{2}^{3}, x_{0}^{2}x_{1}^{3}x_{2}, x_{1}^{5}x_{2}),
(x_{0}^{5}x_{2}, x_{0}^{3}x_{1}^{3}, x_{0}^{2}x_{1}^{2}x_{2}^{2}, x_{1}^{5}x_{2}),
(x_{0}^{3}x_{2}^{3}, x_{0}^{2}x_{1}^{4}, x_{0}^{2}x_{1}^{2}x_{2}^{2}, x_{1}^{5}x_{2}),
(x_{0}^{5}x_{2}, x_{0}^{3}x_{1}^{3}, $

\noindent$x_{0}x_{1}x_{2}^{4},x_{1}^{5}x_{2}),
(x_{0}^{4}x_{2}^{2}, x_{0}^{3}x_{1}^{3}, x_{0}^{2}x_{1}^{2}x_{2}^{2}, x_{1}^{4}x_{2}^{2}),
(x_{0}^{3}x_{2}^{3}, x_{0}^{2}x_{1}^{4}, x_{0}^{2}x_{1}^{2}x_{2}^{2}, x_{1}^{4}x_{2}^{2}),
(x_{0}^{4}x_{2}^{2}, x_{0}^{3}x_{1}^{3}, x_{0}x_{1}x_{2}^{4}, x_{1}^{4}x_{2}^{2}),
(x_{0}^{3}x_{1}^{3}, x_{0}^{3}x_{2}^{3},  $

\noindent$x_{0}^{2}x_{1}^{2}x_{2}^{2},x_{1}^{3}x_{2}^{3}),
x_{0}x_{1}(x_{0}^{4}, x_{0}^{2}x_{1}^{2},x_{0}x_{1}x_{2}^{2}, x_{1}^{4}),
x_{0}x_{1}(x_{0}^{3}x_{2}, x_{0}^{2}x_{1}^{2}, x_{0}x_{1}x_{2}^{2}, x_{1}^{3}x_{2}),
x_{0}x_{1}(x_{0}^{2}x_{1}^{2}, x_{0}^{2}x_{2}^{2}, x_{0}x_{1}x_{2}^{2}, x_{1}^{2}x_{2}^{2}),
$

\noindent$x_{0}x_{1}(x_{0}^{2}x_{1}^{2},x_{0}^{2}x_{2}^{2}, x_{0}x_{2}^{3}, x_{1}^{2}x_{2}^{2}),
x_{0}x_{1}(x_{0}^{4}, x_{0}x_{2}^{3}, x_{1}^{4}, x_{1}^{2}x_{2}^{2}),
x_{0}x_{1}(x_{0}^{4}, x_{0}^{2}x_{1}^{2}, x_{1}^{4}, x_{2}^{4}),
x_{0}x_{1}(x_{0}^{4}, x_{0}x_{1}x_{2}^{2}, x_{1}^{4}, x_{2}^{4}),
 $

\noindent$x_{0}x_{1}(x_{0}^{3}x_{2},x_{0}^{2}x_{1}^{2},x_{1}^{3}x_{2}, x_{2}^{4}),
x_{0}x_{2}(x_{0}^{3}x_{2}, x_{0}^{2}x_{2}^{2}, x_{0}x_{1}x_{2}^{2}, x_{1}^{4}),
x_{0}x_{2}(x_{0}^{2}x_{1}x_{2}, x_{0}^{2}x_{2}^{2}, x_{0}x_{1}x_{2}^{2}, x_{1}^{4}),
x_{0}x_{2}(x_{0}^{3}x_{2}, x_{0}x_{1}^{2}x_{2}, $

\noindent$x_{0}x_{1}x_{2}^{2}, x_{1}^{4}),
x_{0}x_{2}(\!x_{0}^{3}x_{2},x_{0}^{2}x_{1}x_{2}, x_{0}x_{2}^{3}, x_{1}^{4}),
x_{0}x_{2}(\!x_{0}^{3}x_{2}, x_{0}^{2}x_{2}^{2}, x_{0}x_{2}^{3}, x_{1}^{4}),
x_{0}x_{2}(\!x_{0}^{3}x_{2}, x_{0}x_{1}^{2}x_{2}, x_{0}x_{2}^{3}, x_{1}^{4}),
x_{0}x_{2}(\!x_{0}^{2}x_{1}^{2}, $

\noindent$x_{0}^{2}x_{2}^{2}, x_{0}x_{1}^{3}, x_{1}^{3}x_{2}),
x_{0}x_{2}(x_{0}^{2}x_{1}^{2}, x_{0}^{2}x_{2}^{2}, x_{0}x_{1}^{2}x_{2}, x_{1}^{3}x_{2}),
x_{0}x_{2}(x_{0}^{2}x_{2}^{2}, x_{0}x_{1}^{3}, x_{0}x_{1}^{2}x_{2}, x_{1}^{3}x_{2}),
x_{0}x_{2}(x_{0}^{2}x_{1}^{2}, x_{0}^{2}x_{2}^{2}, x_{0}x_{1}^{3}, x_{1}^{4}),
$

\noindent$x_{0}x_{2}(x_{0}^{2}x_{1}^{2}, x_{0}^{2}x_{2}^{2}, x_{1}^{4},x_{1}^{3}x_{2}),
x_{0}x_{2}(x_{0}^{2}x_{1}^{2},x_{0}^{2}x_{2}^{2}, x_{1}^{4}, x_{1}^{2}x_{2}^{2}),
x_{0}x_{2}(x_{0}^{3}x_{2}, x_{0}^{2}x_{1}x_{2}, x_{0}x_{1}^{2}x_{2}, x_{1}^{4}),
x_{0}x_{2}(x_{0}^{3}x_{2}, x_{0}^{2}x_{1}x_{2}, $

\noindent$x_{0}x_{1}x_{2}^{2}, x_{1}^{4}),
x_{0}(x_{0}x_{1}^{4}, x_{0}x_{1}x_{2}^{3},x_{0}x_{2}^{4}, x_{1}^{3}x_{2}^{2}),
x_{0}(x_{0}^{4}x_{2}, x_{0}^{2}x_{1}^{3}, x_{0}x_{1}^{2}x_{2}^{2}, x_{1}^{5}),
x_{0}(x_{0}^{4}x_{2}, x_{0}x_{1}x_{2}^{3}, x_{1}^{5}, x_{1}^{3}x_{2}^{2}),
x_{0}(x_{0}^{2}x_{2}^{3},  $

\noindent$x_{0}x_{1}^{4},x_{0}x_{1}^{2}x_{2}^{2}, x_{1}^{5}),
x_{0}(x_{0}^{4}x_{2}, x_{0}^{2}x_{1}x_{2}^{2},x_{1}^{5}, x_{1}^{2}x_{2}^{3}),
x_{0}(x_{0}^{2}x_{2}^{3}, x_{0}x_{1}^{4}, x_{0}x_{1}x_{2}^{3}, x_{1}^{3}x_{2}^{2}),
x_{0}(x_{0}^{4}x_{2}, x_{0}^{2}x_{2}^{3}, x_{0}x_{1}^{3}x_{2},x_{1}^{5}),
$

\noindent$x_{0}(x_{0}^{4}x_{2}, x_{0}^{2}x_{1}^{3}, x_{1}^{5}, x_{1}x_{2}^{4}),
x_{0}(x_{0}^{3}x_{2}^{2}, x_{0}^{2}x_{1}^{3}, x_{0}x_{2}^{4},x_{1}^{3}x_{2}^{2}),
x_{0}(x_{0}^{4}x_{2}, x_{0}x_{1}^{2}x_{2}^{2}, x_{1}^{5}, x_{1}x_{2}^{4}),
x_{0}(x_{0}^{2}x_{2}^{3}, x_{0}x_{1}^{4}, x_{0}x_{2}^{4}, x_{1}^{3}x_{2}^{2}),
$

\noindent $x_{0}(x_{0}^{2}x_{1}^{3}, x_{0}^{2}x_{2}^{3},
 x_{1}^{4}x_{2}, x_{1}x_{2}^{4}), x_{0}(x_{0}^{4}x_{1}, x_{0}^{2}x_{2}^{3}, x_{0}x_{1}^{3}x_{2}, x_{1}^{5}),
x_{0}(x_{0}^{2}x_{1}x_{2}^{2}, x_{0}x_{1}^{3}x_{2}, x_{1}^{5}, x_{2}^{5})$}

\vskip 2mm
\noindent \underline{$d=7:$\;} {\small
 $x_{0}x_{1}(x_{0}^{2}x_{2}^{3}, x_{0}x_{1}^{4}, x_{0}x_{1}^{2}x_{2}^{2}, x_{1}^{5}),
x_{0}x_{1}(x_{0}^{5}, x_{0}^{2}x_{1}^{2}x_{2}, x_{0}x_{1}x_{2}^{3}, x_{1}^{5}),
x_{0}x_{1}(x_{0}^{4}x_{1}, x_{0}^{3}x_{1}^{2}, x_{0}x_{2}^{4}, x_{1}^{3}x_{2}^{2}),
x_{0}x_{1}(x_{0}^{3}x_{1}^{2},$

\noindent$x_{0}^{2}x_{1}^{3}, x_{0}^{2}x_{2}^{3}, x_{1}^{2}x_{2}^{3}),
x_{0}x_{1}(x_{0}^{5}, x_{0}^{2}x_{1}^{2}x_{2}, x_{1}^{5}, x_{2}^{5}),
x_{0}x_{1}(x_{0}^{4}x_{2}, x_{0}x_{1}^{4}, x_{1}^{5}, x_{2}^{5}),
x_{0}x_{1}(x_{0}^{5}, x_{0}x_{1}x_{2}^{3}, x_{1}^{5}, x_{2}^{5}),
x_{0}x_{2}(x_{0}^{3}x_{2}^{2},$

\noindent$x_{0}^{2}x_{2}^{3},x_{0}x_{1}^{3}x_{2}, x_{1}^{5}),
x_{0}x_{2}(x_{0}^{4}x_{2}, x_{0}^{2}x_{1}x_{2}^{2}, x_{0}x_{2}^{4}, x_{1}^{5}),
x_{0}x_{2}(x_{0}^{4}x_{2}, x_{0}x_{1}^{3}x_{2}, x_{0}x_{2}^{4}, x_{1}^{5}),
x_{0}(x_{0}^{5}x_{2}, x_{0}^{2}x_{1}^{3}x_{2}, x_{0}x_{1}^{2}x_{2}^{3}, x_{1}^{6}),
$

\noindent$x_{0}(x_{0}x_{1}^{5},x_{0}x_{1}^{2}x_{2}^{3},x_{0}x_{2}^{5}, x_{1}^{4}x_{2}^{2}),
x_{0}(x_{0}^{5}x_{2}, x_{0}^{4}x_{1}^{2}, x_{0}^{2}x_{1}^{2}x_{2}^{2}, x_{1}^{3}x_{2}^{3}),
x_{0}(x_{0}^{4}x_{1}^{2}, x_{0}^{4}x_{2}^{2},
 x_{0}^{2}x_{1}^{2}x_{2}^{2}, x_{1}^{3}x_{2}^{3}),
 x_{0}(x_{0}^{3}x_{1}^{3}, x_{0}^{3}x_{2}^{3},$

\noindent$ x_{0}^{2}x_{1}^{2}x_{2}^{2}, x_{1}^{3}x_{2}^{3}),x_{0}(x_{0}^{4}x_{1}x_{2}, x_{0}^{2}x_{1}^{4}, x_{0}^{2}x_{2}^{4}, x_{1}^{3}x_{2}^{3}),
x_{0}(x_{0}^{2}x_{1}^{4}, x_{0}^{2}x_{1}^{2}x_{2}^{2}, x_{0}^{2}x_{2}^{4}, x_{1}^{3}x_{2}^{3}),
x_{0}(x_{0}^{4}x_{1}x_{2}, x_{0}x_{1}^{5}, x_{0}x_{2}^{5}, x_{1}^{3}x_{2}^{3}),
$

\noindent$x_{0}(x_{0}^{2}x_{1}^{2}x_{2}^{2}, x_{0}x_{1}^{5},x_{0}x_{2}^{5},x_{1}^{3}x_{2}^{3}),
x_{0}(x_{0}^{5}x_{2}, x_{0}^{2}x_{1}^{3}x_{2}, x_{1}^{6}, x_{1}x_{2}^{5}),
x_{0}(x_{0}^{5}x_{2}, x_{0}x_{1}^{2}x_{2}^{3}, x_{1}^{6}, x_{1}x_{2}^{5}),
x_{0}(x_{0}^{5}x_{2}, x_{0}^{4}x_{1}^{2}, x_{1}^{5}x_{2}, $

\noindent$x_{1}x_{2}^{5}),
x_{0}(x_{0}^{4}x_{1}^{2}, x_{0}^{4}x_{2}^{2},x_{1}^{5}x_{2},x_{1}x_{2}^{5}),
x_{0}(x_{0}^{3}x_{1}^{3}, x_{0}^{3}x_{2}^{3}, x_{1}^{5}x_{2}, x_{1}x_{2}^{5}),
x_{0}(x_{0}^{4}x_{1}x_{2}, x_{0}^{2}x_{1}^{2}x_{2}^{2}, x_{1}^{6}, x_{2}^{6}),
x_{0}(x_{0}^{4}x_{1}x_{2}, x_{1}^{6}, x_{1}^{3}x_{2}^{3}, $

\noindent$x_{2}^{6}),
x_{0}(\!x_{0}^{2}x_{1}^{2}x_{2}^{2}, x_{1}^{6},x_{1}^{3}x_{2}^{3}, x_{2}^{6}),x_{0}x_{1}x_{2}(\!x_{0}^{2}x_{1}^{2}, x_{0}^{2}x_{2}^{2}, x_{0}x_{1}^{3}, x_{1}^{4}),
x_{0}x_{1}x_{2}(\!x_{0}^{3}x_{2}, x_{0}^{2}x_{1}x_{2}, x_{0}x_{1}^{2}x_{2}, x_{1}^{4}),
x_{0}x_{1}x_{2}(\!x_{0}^{4}, x_{0}^{2}x_{1}^{2}, $

\noindent$x_{0}x_{1}x_{2}^{2}, x_{1}^{4}),
x_{0}x_{1}x_{2}(x_{0}^{3}x_{2},x_{0}^{2}x_{1}x_{2}, x_{0}x_{1}x_{2}^{2}, x_{1}^{4}),
x_{0}x_{1}x_{2}(x_{0}^{3}x_{2}, x_{0}^{2}x_{2}^{2}, x_{0}x_{1}x_{2}^{2}, x_{1}^{4}),
x_{0}x_{1}x_{2}(x_{0}^{2}x_{1}x_{2}, x_{0}^{2}x_{2}^{2}, x_{0}x_{1}x_{2}^{2}, $

\noindent$x_{1}^{4}),
x_{0}x_{1}x_{2}(x_{0}^{3}x_{2}, x_{0}x_{1}^{2}x_{2},x_{0}x_{1}x_{2}^{2}, x_{1}^{4}),
x_{0}x_{1}x_{2}(x_{0}^{3}x_{2}, x_{0}^{2}x_{1}x_{2}, x_{0}x_{2}^{3}, x_{1}^{4}),
x_{0}x_{1}x_{2}(x_{0}^{3}x_{2}, x_{0}^{2}x_{2}^{2}, x_{0}x_{2}^{3}, x_{1}^{4}),
$

\noindent$x_{0}x_{1}x_{2}(x_{0}^{3}x_{1}, x_{0}x_{1}^{3},x_{0}x_{2}^{3}, x_{1}^{4}),
x_{0}x_{1}x_{2}(x_{0}^{3}x_{2}, x_{0}x_{1}^{2}x_{2}, x_{0}x_{2}^{3}, x_{1}^{4}),
x_{0}x_{1}x_{2}(x_{0}^{2}x_{1}^{2}, x_{0}^{2}x_{2}^{2}, x_{0}x_{1}^{3}, x_{1}^{3}x_{2}),
x_{0}x_{1}x_{2}(x_{0}^{2}x_{1}^{2}, $

\noindent$x_{0}^{2}x_{2}^{2}, x_{0}x_{1}^{2}x_{2}, x_{1}^{3}x_{2}),
x_{0}x_{1}x_{2}(x_{0}^{2}x_{2}^{2}, x_{0}x_{1}^{3},x_{0}x_{1}^{2}x_{2}, x_{1}^{3}x_{2}),
x_{0}x_{1}x_{2}(x_{0}^{3}x_{2},x_{0}^{2}x_{1}^{2},x_{0}x_{1}x_{2}^{2}, x_{1}^{3}x_{2}),
x_{0}x_{1}x_{2}(x_{0}^{3}x_{1}, x_{0}^{2}x_{2}^{2}, $

\noindent$x_{0}x_{1}x_{2}^{2}, x_{1}^{3}x_{2}),
x_{0}x_{1}x_{2}(x_{0}^{2}x_{1}^{2}, x_{0}^{2}x_{2}^{2}, x_{1}^{4}, x_{1}^{3}x_{2}),x_{0}x_{1}x_{2}(x_{0}^{4}, x_{0}x_{1}^{3}, x_{0}x_{2}^{3}, x_{1}^{2}x_{2}^{2}),
x_{0}x_{1}x_{2}(x_{0}^{4},x_{0}x_{2}^{3}, x_{1}^{4}, x_{1}x_{2}^{3}).$}

\vskip 2mm

\noindent\underline{$d=8:$\;} {\small$x_{0}x_{1}(x_{0}^{4}x_{2}^{2}, x_{0}^{3}x_{1}^{3}, x_{0}x_{1}x_{2}^{4}, x_{1}^{4}x_{2}^{2}),
x_{0}x_{2}(x_{0}^{3}x_{2}^{3}, x_{0}^{2}x_{1}^{2}x_{2}^{2}, x_{0}x_{1}^{4}x_{2}, x_{1}^{6})$}

\vskip 2mm
 \noindent\underline{$d=9:$\;} {\small$x_{0}x_{1}x_{2}(x_{0}^{3}x_{2}^{3}, x_{0}^{2}x_{1}^{2}x_{2}^{2}, x_{0}x_{1}^{4}x_{2}, x_{1}^{6}),x_{0}x_{1}x_{2}(x_{0}^{3}x_{1}^{3}, x_{0}^{3}x_{2}^{3}, x_{0}^{2}x_{1}^{2}x_{2}^{2}, x_{1}^{3}x_{2}^{3}),
x_{0}x_{1}x_{2}(x_{0}^{6}, x_{0}^{2}x_{1}^{2}x_{2}^{2}, x_{1}^{6}, x_{2}^{6})$.}

\end{rem}


\end{document}